\theoremstyle{theorem}
\newtheorem{prop}{Proposition}[section]
\newtheorem{thm}[prop]{Theorem}
\newtheorem{lem}[prop]{Lemma}
\newtheorem{cor}[prop]{Corollary}
\theoremstyle{definition}
\newtheorem{exa}[prop]{Example}
\newtheorem{rem}[prop]{Remark}
\newcommand{\N}{\mathbb{N}}
\newcommand{\R}{\mathbb{R}}
\newcommand{\Z}{\mathbb{Z}}
\newcommand{\Q}{\mathbb{Q}}
\newcommand{\ZZ}{\mathcal{Z}_*}
\newcommand{\ba}{\mathrm{b}}
\newcommand{\bup}{\overline{\mathrm{b}}}
\newcommand{\bdo}{\underline{\mathrm{b}}}
\newcommand{\ua}{\mathrm{u}}
\newcommand{\uup}{\overline{\mathrm{u}}}
\newcommand{\udo}{\underline{\mathrm{u}}}
\newcommand{\da}{\mathrm{d}}
\newcommand{\dup}{\overline{\mathrm{d}}}
\newcommand{\ddo}{\underline{\mathrm{d}}}
\newcommand{\PN}{\mathbb{P}}
\begin{document}

\begin{title}
{\bf  Kneser's theorem for upper Buck density\\ and relative results} 
\end{title}

%\author[F. Hennecart]{Fran\c cois Hennecart}
%\address{
%Universit\'e Jean Monnet, ICJ UMR5208, CNRS, Ecole Centrale de Lyon, INSA Lyon, Universite Claude Bernard Lyon 1,
%42023 Saint-Etienne, France}
%\email{francois.hennecart@univ-st-etienne.fr}
\date{\today}

\subjclass[2020]{11B05, 11B13}
\keywords{Sumset; Upper Buck density; Inverse theorem}
\maketitle

\centerline{
Fran\c cois HENNECART (\emph{Saint-\'Etienne, France)}
}

\vspace{0.4cm}
\begin{center}
\begin{minipage}[h]{11cm}
{\tiny
Universit\'e Jean Monnet, ICJ UMR5208, CNRS, Ecole Centrale de Lyon, INSA Lyon, Universite Claude Bernard Lyon 1,
42023 Saint-Etienne, France\\
\emph{Email}: \texttt{francois.hennecart@univ-st-etienne.fr}}
\end{minipage}\\[2em]
---------------------------------------
\end{center}

\vspace{0.7cm}
\begin{center}
\begin{minipage}[h]{14cm}
\textsc{Abstract.}
\textrm{\small Kneser's theorem in the integers asserts that denoting by $
\underline{\mathrm{d}}$ the lower asymptotic density, if
$\underline{\mathrm{d}}(X_1+\cdots+X_k)<\sum_{i=1}^k\underline{\mathrm{d}}(X_i)$ then 
the sumset $X_1+\cdots+X_k$ is \emph{periodic} for some positive integer $q$. In this article we establish a similar statement for upper Buck density and compare it with the corresponding result due to Jin involving upper Banach density. We also provide the construction of sequences verifying counterintuitive properties with respect to Buck density of a sequence $A$ and its sumset $A+A$.}
\end{minipage}
\end{center}

\vspace{0.7cm}
\section{\bf Introduction}

We denote by $\Z$ and $\N=\{0,1,2,\dots\}$ the set of all integers and the set of natural integers
respectively. Throughout this paper, for $k\in\N$ and $A,B\subset \Z$, 
$kA$ will denote the set of all multiples $ka$, $a\in A$, while $A+B$ is the set of all sums $a+b$, 
$(a,b)\in A\times B$.
Let  $\ZZ$ be the family of all finite unions of arithmetic progressions
$$
\ZZ=\left\{\bigcup_{i=1}^r(a_i+k_i\N),\ 
r,a_i,k_i\in \N,\ 
%0\le a_i<k_i,\ 
1\le i\le r\right\}.
$$
The elements of $\ZZ$ are called \emph{periodic sets} of $\N$. Note that if $F\subset\N$ is finite then
$\N\smallsetminus F=(\{0,1,\dots,2q-1\}\smallsetminus F)+q\N\in\ZZ$ where
$q$ is any integer greater than the largest element of $F$.
We see easily  that $\ZZ$ is closed for finite union, intersection, complement and additive shift by a positive integer.
 Moreover any $A\in\ZZ$ admits a natural density $\da(A)=\lim_{n\to\infty}\frac{|A\cap[1,n]|}{n}$, which is a rational number. Indeed
for some large enough multiple $q$ of $k_1\dots k_r$, we may write
$$
A:=\bigcup_{i=1}^r(a_i+k_i\N)=\bigcup_{j=1}^s(b_j+q\N)
$$
for some integers $s$ and distinct $b_j$ satisfying $0\le b_j<q$, $1\le j\le s$. Thus $\da(A)=\frac{s}{q}$.

\medskip
For $X\subset\N$ we denote by
$$
\bup(X)=\inf\{\da(A),\ X\subset A\in \ZZ\},
$$
its \emph{upper Buck density}, and by
\begin{equation}\label{eqn-1}
\bdo(X)=\sup\{\da(A),\ X\supset A\in \ZZ\}=1-\bup(\N\smallsetminus X),
\end{equation}
its \emph{lower Buck density}.
If $\bup(X)$ and $\bdo(X)$ coincide we say that $X$ has a \emph{Buck density} which is defined by
their common value and denoted by $\ba(X)$. It is the case when 
$X\in\ZZ$ and we have $\ba(X)=\da(X)$. The notion of Buck density has been introduced more than sixty years ago in \cite{Buc1} (see also \cite{Buc2} ).
Pa\v{s}t\'eka reintroduced Buck density in \cite{Pas} and provided 
several general statements that we will use in the present article (see also \cite{Pas2,IPT}).  
 The concept  was recently  generalized in several works by Leonetti and Tringali (see \cite{Leo,Leo2,Leo3,Leo4}). The main feature of (upper and lower) Buck density in comparison with other density functions is its ability to \emph{measure} the distribution of the sets on infinite arithmetic progressions. This fact is naturally relevant with the notion of periodicity  which is naturally used for describing the structure of an infinite set of integers. 

Let $X\subset\N$ and for any $m\ge1$ let 
$\overline{X}^{(m)}$ be the image of $X$ by the natural homomorphism 
$\varphi_m:\Z\longrightarrow \Z/m\Z$. Let $X^{(m)}$ be the set of all integers in $\{0,\dots,m-1\}$ that are remainders in the euclidean division of some $x\in X$ by $m$.
Further we shall denote by $X_{\infty}^{(m)}$ (resp. $X^{(m)}_*$) the set  of integers $k\in \{0,1,\dots,m-1\}$ such that $(k+m\N)\cap X$ is infinite (resp. $(k+m\N)\smallsetminus X$ is finite). We have $X^{(m)}_*\subset X_{\infty}^{(m)}\subset X^{(m)}$
and $|X^{(m)}|=|\overline{X}^{(m)}|$.

Various known results give strong information on the structure of sumsets $Y=X_1+\cdots+X_k$ of integers such that
$f(Y)< \sum_{i=1}^kf(X_i)$ where $f$ is a density function on $\N$, as e.g. $\ddo$, the lower asymptotic density,
$\dup$, the upper asymptotic (or natural) density and $\uup$, the upper uniform density, called also upper Banach density defined for any $X\subset\N$ by
$$
\begin{aligned}
\dup (X)&=\limsup_{n\to\infty}\frac{|X\cap[1,n]|}n,\\
\ddo (X)&=\liminf_{n\to\infty}\frac{|X\cap[1,n]|}n,\\
\uup (X)&=\lim_{n\to\infty}\frac{\max_{k\ge0} |X\cap[k+1,k+n]|}{n},\\
\udo (X)&=\lim_{n\to\infty}\frac{\min_{k\ge0} |X\cap[k+1,k+n]|}{n}.
\end{aligned}
$$
Note that similarly to \eqref{eqn-1} we have the following identities involving complementary sets for their 
asymptotic and uniform densities.
\begin{equation}\label{equdo}
\ddo (X)=1-\dup(\N\smallsetminus X),\quad
\udo (X)=1-\uup(\N\smallsetminus X).
\end{equation}

The story starts in the middle of the twentieth century with the prominent Kneser's theorem for sumsets in $\N$ when $f=\ddo$ is the lower asymptotic density (cf. \cite{Kne}; see also \cite{Hal}).
For upper asymptotic or upper uniform density, several achievements have been made
(cf. \cite{Bih, Bor, Jin2}) in the meantime. In \cite{Gri} Griesmer obtained in arbitrary countable abelian groups a Kneser type statement with  the upper uniform density 
$\uup$  for pairs of sets $(A,B)$ such that $\uup(A+B)< \uup(A)+\uup(B)$. Note also that the case of $\sigma$-finite abelian groups, which is a special case of countable abelian groups, has been considered in \cite{Bie}, yielding a sharper description of sumsets $A+B$ with \emph{small} lower asymptotic density, resp. \emph{small} upper asymptotic density under the restriction $B=\pm A$. 

Our goal is to obtain a Kneser type statement when $f=\bup$ is the upper Buck density in the natural integers.
In Section \ref{S5} we shall see that our results may provide structural information even though Kneser's theorem with 
upper uniform density  (see \cite{Bih,Jin2}) does not apply. 

 We define $\eta(X_1,\dots,X_k)$ by 
\begin{equation}\label{eqeta}
\bup(X_1+\cdots+X_k)=(1-\eta(X_1,\dots,X_k))\sum_{i=1}^k\bup(X_i).
\end{equation}

We say that a subset $A$ of a commutative monoid $G$ with $0$ as identity element, is \emph{periodic} if there exists $g\in G\smallsetminus\{0\}$ such that $A+g\subset A$. When
$G=\N$ this is equivalent to say that $A\in \ZZ$. When $G=\Z/q\Z$ this means that there exists
a proper divisor $d$ of  $q$ such that $A+d\Z/q\Z=A$.

\begin{thm}\label{thmprinc} Let $k\ge2$ and $X_1,\dots,X_k\subset \N$ be non empty and such that 
$$
\bup(X_1+\cdots+X_k)<\sum_{i=1}^k\bup(X_i).
$$
Set $\eta=\eta(X_1,\dots,X_k)>0$
and $\sigma=\sum_{i=1}^k\bup(X_i)$. 
Then there  exist a positive integer $q\le \frac{2k-2}{\eta\sigma}$ 
and, for each $1\le i\le k$,  a periodic set $A_i=\bigcup_{j=1}^{r_i}(a_{ij}+q\N)\in\ZZ$ where the integers $a_{i1},\dots,a_{ir_i}\in\{0,\dots,q-1\}$ are distinct, such that 

\begin{enumerate}[itemsep=2pt,parsep=2pt]
\item $q$ is minimal and $\overline{A}_1^{(q)}+\cdots+\overline{A}_k^{(q)}$ is not periodic
in $\Z/q\Z$.

\item $X_i\subset A_i,\quad \overline{A}_i^{(q)}=\overline{X}_i^{(q)},\quad |\overline{A}_i^{(q)}|=r_i,\quad |\overline{A}_1^{(q)}+\cdots+\overline{A}_k^{(q)}|=\sum_{i=1}^k(r_i-1)+1$.

\item If $r_i\ge2$ for at least two indices $i$'s, then
$\overline{A}_1^{(q)}+\cdots+\overline{A}_k^{(q)}$ is quasi-periodic or an arithmetic progression in $\Z/q\Z$.

\item Moreover for all integer $m\ge1$
and all $(k+1)$-tuples  $(i_1,\dots,i_k,h)\in\prod_{j=1}^k\{1,\dots,r_j\}\times\{0,\dots,m-1\}$, $(a_{1i_1}+\cdots + a_{ki_k}+hq+ mq\N) \cap (X_1+\cdots+X_k) \text{ is infinite}$.

\item Finally
\begin{equation}\label{eqbupX+X}
\bup(X_1+\cdots+X_k)=\frac{\sum_{i=1}^k(r_i-1)+1}q,
\end{equation}
and 
$$
0\le
\frac1k\sum_{i=1}^k\left(\frac{r_i}{q}-\bup(X_i)\right) < \frac{k-1}{kq}.
$$
\end{enumerate}
\end{thm}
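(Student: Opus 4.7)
The plan is to translate the problem into a question in a finite cyclic group $\Z/m\Z$ by approximating each $X_i$ with the periodic set $A_i^{(m)} := X_i^{(m)} + m\N$, then invoking Kneser's theorem. The key tool, due to Pa\v{s}t\'eka, is the representation $\bup(Y) = \inf_{m \ge 1} |Y^{(m)}|/m$ as a non-increasing infimum along divisibility, together with the identity $\overline{X_1+\cdots+X_k}^{(m)} = \overline{X}_1^{(m)} + \cdots + \overline{X}_k^{(m)}$ in $\Z/m\Z$. Writing $S := X_1+\cdots+X_k$, I fix $\epsilon \in (0, \eta\sigma/2)$ and choose a common $m$, multiple of a suitable $m_0$, so that $|\overline{S}^{(m)}|/m < \bup(S) + \epsilon$ while $|X_i^{(m)}|/m \ge \bup(X_i)$ for each $i$. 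The Kneser deficit then satisfies $\sum_i |X_i^{(m)}| - |\overline{S}^{(m)}| > m(\eta\sigma - \epsilon)$, which exceeds $k-1$ as soon as $m > (k-1)/(\eta\sigma - \epsilon)$.

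By Kneser's theorem in $\Z/m\Z$, the sumset $\overline{S}^{(m)}$ must have a non-trivial stabilizer $H$. Setting $q := m/|H|$ and projecting to $\Z/q\Z$, the sumset $\overline{S}^{(q)}$ has trivial stabilizer (any non-trivial stabilizer in $\Z/q\Z$ would lift to an enlargement of $H$, contradicting its maximality), hence is non-periodic, and $|\overline{S}^{(q)}|/q = |\overline{S}^{(m)}|/m$ since $\overline{S}^{(m)}$ is a union of cosets of $H$. Applying Kneser in $\Z/q\Z$ gives $|\overline{S}^{(q)}| \ge \sum_i r_i - (k-1)$ with $r_i := |X_i^{(q)}|$, hence $q \le (k-1)/(\eta\sigma - \epsilon) < 2(k-1)/(\eta\sigma)$. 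Selecting the minimal such $q$ one obtains (i), with $A_i := X_i^{(q)} + q\N$.

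For (ii), the equality $|\overline{S}^{(q)}| = \sum r_i - (k-1)$ must be promoted beyond Kneser's mere inequality. I would argue by a descent/minimality argument: any strict surplus in Kneser would either permit a further reduction of $q$ (contradicting its minimality) or violate the identity $|\overline{S}^{(q)}|/q = \bup(S)$, itself obtained by a pigeonhole argument letting $\epsilon \to 0$ along a sequence of $q_\epsilon$'s confined to a bounded set of integers. Part (iii) is then the Vosper-type inverse of Kneser's theorem in $\Z/q\Z$: equality with trivial stabilizer and at least two summands of size $\ge 2$ forces the $\overline{A}_i^{(q)}$ to be arithmetic progressions with a common difference, or the sumset to be quasi-periodic. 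Point (iv) is a saturation statement: if $(a_{1i_1}+\cdots+a_{ki_k} + hq + mq\N) \cap S$ were finite, then $|\overline{S}^{(mq)}|/(mq) < |\overline{S}^{(q)}|/q = \bup(S)$, contradicting $\bup(S) = \inf_\ell |\overline{S}^{(\ell)}|/\ell$. Finally (v) is direct arithmetic: (ii) yields $\bup(S) = (\sum r_i - (k-1))/q$, from which $\sum_i r_i/q - \sigma = (k-1)/q - \eta\sigma$, and dividing by $k$ together with $q \le (k-1)/(\eta\sigma)$ gives both the non-negativity and the strict upper bound, strictness using $\eta\sigma > 0$.

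The main obstacle I foresee is promoting the Kneser inequality in (ii) to equality: the standard descent provides only the inequality, and upgrading it requires a delicate combination of the minimality of $q$ with the attained-infimum identity $|\overline{S}^{(q)}|/q = \bup(S)$, or equivalently an appeal to an inverse Kneser theorem in $\Z/q\Z$. Threading the finite descent while simultaneously preserving non-periodicity and the density identity is the most delicate point of the argument.
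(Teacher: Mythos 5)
Your strategy is the same as the paper's: reduce to $\Z/m\Z$ via the modular characterisation of $\bup$, apply Kneser's theorem to force a non-trivial stabiliser $H$, pass to $q=m/|H|$, and exploit the fact that the bound on $q$ is uniform in $\varepsilon$ to pigeonhole a single $q$ along a sequence $\varepsilon\to0$ — this last point, which you do identify, is exactly the paper's key step. There are, however, two genuine soft spots. First, your argument for (iv) fails as written: if the class $a_{1i_1}+\cdots+a_{ki_k}+hq+mq\N$ meets $S:=X_1+\cdots+X_k$ in a finite \emph{non-empty} set, that residue still belongs to $S^{(mq)}$, so $|\overline{S}^{(mq)}|/(mq)$ does not decrease and no contradiction with $\bup(S)=\inf_\ell|S^{(\ell)}|/\ell$ arises. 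One must count only classes met infinitely often, i.e.\ work with $S_\infty^{(mq)}$ and the identity $\bup(S)=\lim_n|S_\infty^{(m_n)}|/m_n$ of \eqref{eqbupbdo} (equivalently, Proposition \ref{pp210} with ``$\exists\,\ell$'' strengthened to ``$\exists$ infinitely many $\ell$''); even then the argument presupposes $\bup(S)=|\overline{S}^{(q)}|/q$, which is part of (v), so the order of deduction matters.

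Second, your logic around (ii) and (v) is inverted. The combinatorial equality $|\overline{S}^{(q)}|=\sum_i r_i-(k-1)$ is not the obstacle you make it out to be: it follows at once from the equality (stabiliser) form of Kneser's theorem applied at level $m$, where $|\overline{S}^{(m)}|=\sum_i|\overline{X}_i^{(m)}+H|-(k-1)|H|$ and $r_i=|\overline{X}_i^{(m)}+H|/|H|$; no descent on $q$ is needed. What genuinely requires work is the density identity \eqref{eqbupX+X}: the construction at a given $\varepsilon$ only yields $\bup(S)\le(\sum_i(r_i-1)+1)/q\le\bup(S)+\varepsilon\sigma$, and one must rerun it for each $\varepsilon=1/h_n$, use the uniform bound $q(\varepsilon)\le\frac{2k-2}{\eta\sigma}$ to extract a constant value of $q$, and let $\varepsilon\to0$. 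You mention this pigeonhole but attach it to (ii), and then in (v) you assert that ``(ii) yields $\bup(S)=(\sum_i r_i-(k-1))/q$'', which is a non sequitur: (ii) is a statement about cardinalities in $\Z/q\Z$ and says nothing about $\bup(S)$ without the limiting step. Reordered correctly (fix $q$ by pigeonhole, deduce (iv), then obtain (v) from Proposition \ref{pp210}), and with Kemperman's structure theorem rather than a Vosper-type statement doing the work in (iii), your outline coincides with the paper's proof.
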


We stress the fact that iv) of Theorem \ref{thmprinc} shows that integers
of $\sum_{i=1}^k X_i$ in a same
residue class modulo $q$ \emph{distribute} in an infinite way along all corresponding subclasses modulo $mq$, for any multiple $mq$ of $q$.

An important case concerns small doubling sumsets (with respect to upper  Buck density), namely when $k=2$ and $X_2=X_1$ in the above statement. 
We shall exploit Kemperman's structure theorem (see \cite{Kem,Kem2}) and obtain the following. 
Before we recall that
a non periodic subset $S$ of an abelian group of $G$ is said to be \emph{quasi-periodic} if there exist a non trivial subgroup $K$, an element $s\in S$ and a  non empty set $S''\subsetneq K$ such that $S':=S\smallsetminus(s+S'')$ is $K$-periodic, that is $S'+K=S'$.

\begin{thm}\label{thmsec}
 Let $X\subset\N$ satisfy $\bup(X+X)<2\bup(X)$. Then there exist a positive integer $q$ and a periodic set  $A=\bigcup_{j=1}^{r}(a_{j}+q\N)\in\ZZ$ such that

\begin{enumerate}[itemsep=2pt,parsep=2pt]

\item $q$ is minimal and $\overline{A}^{(q)}+\overline{A}^{(q)}$ is not periodic in $\Z/q\Z$.

\item $X\subset A$, $\overline{X}^{(q)}=\overline{A}^{(q)}$, $|\overline{X}^{(q)}|=r\ge1$, $|\overline{X}^{(q)}+\overline{X}^{(q)}|=2r-1$.

\item If $r=1$ then $\overline{X}^{(q)}=\{\overline{a}\}$ and $\overline{X}^{(q)}+\overline{X}^{(q)}=\{2\overline{a}\}$ for some $\overline{a}\in\Z/q\Z$.

\item If $r\ge2$ then $\overline{X}^{(q)}+\overline{X}^{(q)}$ is

\begin{itemize}

\item either quasi-periodic,

\item  or a non quasi-periodic arithmetic progression in $\Z/q\Z$. In that case
$\overline{X}^{(q)}$ is also  an arithmetic progression in $\Z/q\Z$.

\end{itemize}

\item For any integer $m\ge1$ and all triples $(i,j,h)\in\{1,\dots,r\}^2\times\{0,\dots,m-1\}$, 
$(a_{i}+ a_{j}+hq+ mq\N) \cap (X+X) \text{ is infinite}$.

\item $\bup(X+X)=\frac{2r-1}{q}$ and $\frac{r}{q}-\frac1{2q} <\bup(X)\le\frac rq$.
\end{enumerate}
\end{thm}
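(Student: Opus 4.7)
The plan is to specialize Theorem \ref{thmprinc} to the case $k=2$ and $X_1=X_2=X$, and then to refine item (iii) of that theorem by invoking Kemperman's structure theorem in the finite cyclic group $\Z/q\Z$. The hypothesis $\bup(X+X)<2\bup(X)$ is exactly the hypothesis of Theorem \ref{thmprinc} under this specialization, so the latter produces a minimal positive integer $q$ and periodic sets $A_1,A_2\in\ZZ$ of period $q$ with $X_i\subset A_i$ and $\overline{A}_i^{(q)}=\overline{X}_i^{(q)}$. Since $X_1=X_2=X$ forces $\overline{X}_1^{(q)}=\overline{X}_2^{(q)}$, the two periodic sets coincide and I denote their common value by $A$, setting $r_1=r_2=:r$. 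Items (i), (ii) and (v) of Theorem \ref{thmsec} then transfer word-for-word from the corresponding items of Theorem \ref{thmprinc}, and item (vi) is obtained by substituting $k=2$ and $r_1=r_2=r$ into \eqref{eqbupX+X} and into the bound $0\le\frac12\bigl(\frac{r_1}{q}+\frac{r_2}{q}-2\bup(X)\bigr)<\frac{1}{2q}$, which collapses to $\frac{r}{q}-\frac{1}{2q}<\bup(X)\le\frac{r}{q}$.

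Item (iii) is immediate: if $r=1$ then $\overline{A}^{(q)}$ is a single residue class $\{\overline{a}\}$, whence $\overline{A}^{(q)}+\overline{A}^{(q)}=\{2\overline{a}\}$. The substantive content lies in item (iv). There I apply Kemperman's structure theorem (see \cite{Kem,Kem2}) to the symmetric sumset $\overline{A}^{(q)}+\overline{A}^{(q)}$ in $\Z/q\Z$, which satisfies the critical equality $|\overline{A}^{(q)}+\overline{A}^{(q)}|=2r-1=2|\overline{A}^{(q)}|-1$ inherited from Theorem \ref{thmprinc}(ii). Since by item (i) the sumset is \emph{not} periodic in $\Z/q\Z$, Kemperman's classification forbids the periodic alternative and leaves exactly the two possibilities: either $\overline{A}^{(q)}+\overline{A}^{(q)}$ is quasi-periodic, or it is a non-quasi-periodic arithmetic progression. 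In the second case, a further appeal to the Kemperman analysis (equivalently, the cyclic analogue of Vosper's theorem) forces $\overline{A}^{(q)}$ itself to be an arithmetic progression in $\Z/q\Z$ with the same common difference as the sumset.

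The main obstacle is the clean extraction of this symmetric dichotomy from Kemperman's theorem. Kemperman's result is usually stated for an arbitrary pair $(A,B)$ and describes $A+B$ recursively through several elementary configuration types, some of which involve periodicity modulo a proper subgroup; the non-periodicity hypothesis (i) is precisely what cuts the recursion short and reduces the list to the two cases named in (iv). A subsidiary care is needed to verify that, in the arithmetic-progression case, the common difference of $\overline{A}^{(q)}+\overline{A}^{(q)}$ is indeed inherited by $\overline{A}^{(q)}$; this is a short symmetric-case argument rather than a new ingredient, but it is the point where the symmetry $X_1=X_2$ genuinely plays a role beyond Theorem \ref{thmprinc}.
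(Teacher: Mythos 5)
Your architecture is exactly the paper's: specialize Theorem \ref{thmprinc} to $k=2$, $X_1=X_2=X$ (noting $\overline{X}_1^{(q)}=\overline{X}_2^{(q)}$ forces $A_1=A_2$ and $r_1=r_2$), which delivers (i), (ii), (iii), (v), (vi) and the dichotomy ``quasi-periodic or arithmetic progression'' for the sumset. The one piece of genuinely new work is the step you defer at the end: showing that when $\overline{X}^{(q)}+\overline{X}^{(q)}$ is a \emph{non-quasi-periodic} arithmetic progression with $|\overline{X}^{(q)}+\overline{X}^{(q)}|=2|\overline{X}^{(q)}|-1$, the summand $\overline{X}^{(q)}$ is itself an arithmetic progression. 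You attribute this to ``the cyclic analogue of Vosper's theorem,'' but no such statement is directly quotable for composite $q$: Vosper's theorem lives in $\Z/p\Z$, and the form of Kemperman's theorem invoked elsewhere in the argument (Theorem 2.1 of \cite{Kem}, as in Lemma \ref{lem32}) classifies only the \emph{sumset} as quasi-periodic or an arithmetic progression and says nothing about the summands. Extracting summand structure from the full Kemperman structure theorem would require working through its elementary configuration types and checking that, for a symmetric pair $(S,S)$ with aperiodic, non-quasi-periodic sumset, only the arithmetic-progression type survives; that is not a one-line citation.

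The paper isolates precisely this claim as Lemma \ref{lem41} and proves it: after using Kemperman to know that $S+S$ is an arithmetic progression, one normalizes by an affine change of variable so that $S+S=\varphi_m(\Z\cap[-2n+t+1,t-1])$ with $n=|S|$, observes that consequently $S\cap\bigl(-S+\varphi_m(\Z\cap[t,t+m-2n])\bigr)=\varnothing$ with $m-2n\ge1$, and then, assuming $S$ is not an interval, derives the lower bound $|-S+\varphi_m(\Z\cap[t,t+m-2n])|\ge m-n+1$ (using that the difference set $\{s_n-s_i\}$ is not an arithmetic progression in $\Z$), which contradicts $|S|=n$. Your proposal is correct in outline and identifies the right pressure point, but as written item (iv) rests on an unproved assertion; to complete the proof you must supply this counting argument or an equivalent derivation from the detailed Kemperman classification.
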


The last section is devoted to various remarks and results. In particular we establish the following
(see Propositions \ref{pp62} and \ref{pp67}).

\begin{thm}\label{thm13}
Let $0\le \alpha<\frac14$. Then there exists a set $A\subset \N$ such that
$\ba(A)=\alpha$ and $\bdo(A+A)\ne \bup(A+A)$.
\end{thm}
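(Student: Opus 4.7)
The plan is to construct an explicit set $A$ for each admissible $\alpha$, exploiting that upper Buck density is \emph{not} subadditive: the periodic sets $P=\{0,\ldots,k-1\}+q\N$ and $R=\{0,k,\ldots,(\lfloor q/k\rfloor-1)k\}+q\N$ have densities of order $k/q+1/k$ yet $P+R=\N$. This allows $A+A$ to carry much more Buck-type ``mass'' than $A$ itself, making the gap $\bdo(A+A)<\bup(A+A)$ achievable even when $A$ is thin.

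For $\alpha=0$, I would take $A=Q:=\{n^2:n\in\N\}$. Modulo a primorial $m=p_1\cdots p_k$, the quadratic residues occupy a fraction $\prod_{i=1}^k(p_i+1)/(2p_i)\to 0$, so $\bup(Q)=0$ and hence $\ba(Q)=0$. Fermat's two-square theorem gives $\bdo(Q+Q)=0$: for any infinite AP $a+d\N$ with $\gcd(a,d)=1$ and any prime $p\equiv 3\pmod 4$ with $p\nmid d$, a positive fraction of indices $k$ satisfy $v_p(a+dk)=1$ exactly, so $a+dk$ is not a sum of two squares and $a+d\N\not\subset Q+Q$. Meanwhile modulo every odd prime every residue is a sum of two squares, and modulo $2^k$ the density of $Q+Q$ equals $(2^{k-1}+1)/2^k$, so by CRT $\bup(Q+Q)=\tfrac12$. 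Thus $\bdo(Q+Q)=0<\tfrac12=\bup(Q+Q)$.

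For rational $\alpha=p/q\in(0,\tfrac14)$, I would take $A=B\cup T$ where $B=\bigcup_{s\in S}(s+q\N)$ with $S=\{0,\ldots,p-1\}\subset\Z/q\Z$ and $T=p+qQ$. Since $\alpha<\tfrac14$ forces $2p<q/2$, the residue $p\bmod q$ lies outside both $S$ and $S+S=\{0,\ldots,2p-2\}$, so the residue $2p\bmod q$ is reachable in $A+A$ only through $T+T$. The scaling identity $\bup(c+qX)=\bup(X)/q$ gives $\ba(T)=0$, hence $\ba(A)=\ba(B)=p/q$. Decomposing $A+A=(B+B)\cup(B+T)\cup(T+T)$ one checks that $A+A$ contains every full residue class $r+q\N$ for $r\in\{0,\ldots,2p-1\}$, while $A+A\cap(2p+q\N)=T+T=2p+q(Q+Q)$ inherits the Fermat gap. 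Hence $\bdo(A+A)=2p/q$ and $\bup(A+A)=2p/q+1/(2q)$, giving a gap of $1/(2q)>0$. For irrational $\alpha\in(0,\tfrac14)$, I would refine the construction by enforcing compatibility with an approximating sequence of rational Buck densities $p_k/q_k\to\alpha$ and grafting in the same type of perturbation $T$.

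The main obstacle is verifying these two Buck densities of $A+A$ rigorously: confirming $B+T$ does not plug the Fermat-style holes in residue $2p\bmod q$ (ensured because $p\notin S$ so $S+p\subset\{p,\ldots,2p-1\}$ entirely avoids residue $2p$), and proving $\bup(A+A)=2p/q+1/(2q)$ via the primorial-CRT computation giving $\bup(Q+Q)=\tfrac12$ and hence $\bup(T+T)=1/(2q)$ by scaling. The restriction $\alpha<\tfrac14$ is precisely what guarantees $(2p-1)/q<\tfrac12$, so that the contribution $1/(2q)$ of $T+T$ to the upper Buck density remains detectable (and not absorbed into a larger background).
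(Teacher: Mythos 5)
Your overall strategy --- graft a thin set $T$ with $\ba(T)=0$ but $\bup(T+T)>0=\bdo(T+T)$ onto a periodic background of density $\alpha$, placed so that $B+T$ and $B+B$ cannot fill the residue class carrying $T+T$ --- is exactly the architecture of the paper's proof (which takes $A=B_\alpha\cup D_K$). But your chosen ingredient fails: $\bup(Q+Q)=0$, not $\tfrac12$. The point is that the very Fermat obstruction you invoke for the lower bound is itself a \emph{congruence} obstruction, and upper Buck density detects all congruence obstructions. Concretely, for each prime $p\equiv3\pmod4$, if $n=x^2+y^2$ and $p\mid n$ then $p\mid x$ and $p\mid y$, hence $p^2\mid n$; so the $p-1$ residues $r$ modulo $p^2$ with $v_p(r)=1$ are never attained by $Q+Q$, and $|(Q+Q)^{(p^2)}|\le p^2-(p-1)$. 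By CRT, taking $m=\prod_{p\le N,\,p\equiv3(4)}p^2$ gives
$$
\bup(Q+Q)\le\prod_{\substack{p\le N\\ p\equiv3\ (4)}}\Bigl(1-\frac{p-1}{p^2}\Bigr)\xrightarrow[N\to\infty]{}0,
$$
since $\sum_{p\equiv3(4)}1/p$ diverges. (Your computation only tested the prime $2$ and odd primes to the first power.) Thus $\ba(Q+Q)=0$ and there is no gap; likewise $\bup(T+T)=\bup(Q+Q)/q=0$ in your rational case, so $\bdo(A+A)=\bup(A+A)=2p/q$ and the construction proves nothing. The irrational case is in any event only a sketch.

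To make the strategy work you need a set $T$ for which the complement of $T+T$ is ``large'' for reasons that are \emph{not} congruence obstructions, while the residue counts $|(T+T)^{(m)}|/m$ converge to a positive limit. This is what the paper's $D_K=\{\sum_{f\in F}2^f: F\cap K=\varnothing\}$ achieves: $\N\smallsetminus(D_K+D_K)$ contains arbitrarily long intervals (forcing $\bdo(D_K+D_K)=0$ and $\bup$ of the complement equal to $1$), yet the local densities form the convergent product $\delta_K=\prod_{t\ge0}(1-2^{k_{t-1}-k_t})$, which can be made any value in $(0,1)$ by choosing $K$ with $k_t-k_{t-1}\to\infty$ fast enough. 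Your divergent Euler product over $p\equiv3\pmod4$ is precisely the failure mode to avoid. Separately, the paper handles all real $\alpha\in(0,\tfrac14)$ at once via the dyadic set $B_\alpha$ of Example \ref{exa28} rather than by rational approximation, and uses $\alpha<\tfrac14$ to guarantee that $B_\alpha+B_\alpha$ and $B_\alpha+D_K$ sit inside a periodic set of density at most $\tfrac12<\delta_K$, which is a cleaner way to keep the perturbation's contribution ``detectable'' than your residue-class bookkeeping.
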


Ruzsa (unpublished) asked the question whether or not  there exist $0<\nu<1$ and $c>0$ such that
for any $A\subset \N$ having an asymptotic density  
$$
\ddo(A+A)\ge c \cdot (\dup(A+A))^{1-\nu}(\da(A))^{\nu}.
$$
We show that  such statement holds true with Buck density in place of 
asymptotic density and $\nu=\frac12$, $c=1$  (cf. Proposition \ref{pp68}) and
deduce a partial answer to Ruzsa's problem (cf. Corollary \ref{cor69}).

\section{\bf General properties of Buck density and first examples}

A sequence of integers $(m_n)_{n\ge1}$ is said to be \emph{multiplicatively increasing and exhaustive} if $\forall n\ge1, \ m_n\mid m_{n+1}$ and $\forall q\in\N\smallsetminus\{0\}$, $\exists n\ge1$ such that $q\mid m_n$. Then
\begin{equation}\label{eqbupbdo}
\bup(X)=\lim_{n\to\infty}\frac{|{X^{(m_n)}}|}{m_n}=
\lim_{n\to\infty}\frac{|X_{\infty}^{(m_n)}|}{m_n},\quad
\bdo(X)=\lim_{n\to\infty}\frac{|X_*^{(m_n)}|}{m_n}.
\end{equation}
According to these definitions, $m\mapsto |X^{(m)}|$ can be seen as the analogue for upper Buck density of the usual counting function $n\mapsto |X\cap[1,n]|$ which leads to upper asymptotic density
and $n\mapsto \max_{k\ge0} |X\cap[k+1,k+n]|$ to upper Banach density.

\medskip
We have the following properties, which can be found essentially in \cite{Leo,Leo2,Pas}.

\begin{lem}\label{lem21}
 Let $A=\bigcup_{j=1}^r(a_j+q\N)\in \ZZ$ and $X,X'\subset \N$. We have

\begin{enumerate}[itemsep=2pt,parsep=2pt]

\item $\bup(X\cup X')\le \bup(X)+\bup(X')$.

\item If $X\subset A$ and $X'\subset \N\smallsetminus A$ then $\bup(X\cup X')=\bup(X)+\bup(X')$.

\item $\bdo(X\cup X')\le \bdo(X)+\bup(X')$.

\item If $X\subset A$
 then
\begin{equation}\label{eq-2}
\da(A)=1-\da(\N\smallsetminus A) = \bup(X)+\bdo(A\smallsetminus X).
\end{equation}
\end{enumerate} 
\end{lem}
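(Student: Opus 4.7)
The plan is to deduce all four assertions from the limit formulas \eqref{eqbupbdo} together with the fact that $\ZZ$ is closed under finite union, intersection and complement, together with the finite additivity of $\da$ on $\ZZ$.

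For (i), given $\ve>0$, I would choose $B,B'\in\ZZ$ with $X\subset B$, $X'\subset B'$, $\da(B)\le\bup(X)+\ve/2$ and $\da(B')\le\bup(X')+\ve/2$. Then $B\cup B'\in\ZZ$ covers $X\cup X'$, and by finite subadditivity of $\da$ on $\ZZ$ we get $\bup(X\cup X')\le \da(B\cup B')\le\bup(X)+\bup(X')+\ve$. Letting $\ve\to 0$ yields the subadditivity.

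For (ii), I would pick a multiplicatively increasing and exhaustive sequence $(m_n)_{n\ge1}$ with $q\mid m_1$. For every $n$ the hypothesis $X\subset A$ and $X'\subset\N\smallsetminus A$ forces $X^{(m_n)}\subset A^{(m_n)}$ and $X'^{(m_n)}\subset(\N\smallsetminus A)^{(m_n)}$; since these two residue sets modulo $m_n$ are disjoint, the decomposition $(X\cup X')^{(m_n)}=X^{(m_n)}\sqcup X'^{(m_n)}$ is disjoint, so $|(X\cup X')^{(m_n)}|=|X^{(m_n)}|+|X'^{(m_n)}|$. Dividing by $m_n$ and passing to the limit via \eqref{eqbupbdo} gives the announced equality (combined with (i) for the reverse inequality if preferred).

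For (iii), the key observation is purely set-theoretic: if $k\in(X\cup X')_*^{(m)}$, i.e.\ $(k+m\N)\smallsetminus(X\cup X')$ is finite, then either $(k+m\N)\smallsetminus X$ is already finite (so $k\in X_*^{(m)}$), or it is infinite, in which case all but finitely many of its elements lie in $X'$, so $(k+m\N)\cap X'$ is infinite and $k\in X'^{(m)}_\infty$. Therefore $(X\cup X')_*^{(m)}\subset X_*^{(m)}\cup X'^{(m)}_\infty$, and dividing by $m_n$ and letting $n\to\infty$ via \eqref{eqbupbdo} gives $\bdo(X\cup X')\le\bdo(X)+\bup(X')$. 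Finally for (iv) I would apply (ii) to the pair $(X,\N\smallsetminus A)$, which is legitimate because $X\subset A$ and $\N\smallsetminus A\in\ZZ$, obtaining $\bup((\N\smallsetminus A)\cup X)=\da(\N\smallsetminus A)+\bup(X)$; combining with \eqref{eqn-1} gives $\bdo(A\smallsetminus X)=1-\bup((\N\smallsetminus A)\cup X)=\da(A)-\bup(X)$, which is \eqref{eq-2}.

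The main obstacle, in my view, is (iii): the subtlety is that $k$ may belong to $(X\cup X')_*^{(m)}$ without belonging to either $X_*^{(m)}$ or $X'^{(m)}_*$, because the ``missing'' part of $k+m\N$ relative to $X$ may be entirely absorbed by $X'$, which is why the asymmetric bound uses $\bup(X')$ rather than $\bdo(X')$.
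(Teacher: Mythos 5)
Your parts (i) and (iv) coincide with the paper's own argument. For (ii) and (iii) you take a genuinely different route: the paper stays entirely with the infimum/supremum definitions --- for (ii) it picks a single cover $B\in\ZZ$ of $X\cup X'$ with $\da(B)$ close to $\bup(X\cup X')$ and splits it as the disjoint union of $A\cap B$ and $(\N\smallsetminus A)\cap B$, and for (iii) it passes to complements and applies the subadditivity (i) to $(\N\smallsetminus X)\smallsetminus X'$ --- whereas you argue through the counting characterization \eqref{eqbupbdo} along a multiplicatively increasing exhaustive sequence. Your proof of (iii) is correct and, in my view, more transparent: the inclusion $(X\cup X')_*^{(m)}\subset X_*^{(m)}\cup X'^{(m)}_{\infty}$ does all the work and directly explains why the bound is asymmetric in $\bdo$ and $\bup$, which the paper's complement manipulation obscures.

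One small repair is needed in (ii). The sets $A^{(m)}$ and $(\N\smallsetminus A)^{(m)}$ need not be disjoint even when $q\mid m$, because the definition of $\ZZ$ does not force $a_j<q$: for $A=5+2\N$ one has $A^{(2)}=\{1\}$ while $(\N\smallsetminus A)^{(2)}=\{0,1\}$, since $1,3\in\N\smallsetminus A$. Consequently the identity $|(X\cup X')^{(m_n)}|=|X^{(m_n)}|+|X'^{(m_n)}|$ can fail (take $X=A$, $X'=\{1,3\}$). The fix is immediate: use the other form of \eqref{eqbupbdo}, namely $\bup(Y)=\lim_n |Y_{\infty}^{(m_n)}|/m_n$. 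When $q\mid m$, the sets $A_{\infty}^{(m)}$ and $(\N\smallsetminus A)_{\infty}^{(m)}$ really are disjoint, since each class $k+m\N$ is, up to finitely many elements, contained either in $A$ or in its complement; hence $(X\cup X')_{\infty}^{(m)}=X_{\infty}^{(m)}\sqcup X'^{(m)}_{\infty}$ and the additivity follows as you intended. (Alternatively, reduce the $a_j$ modulo $q$ and discard the finite discrepancy, which affects neither $\bup(X)$ nor $\bup(X')$.) With this adjustment your proof is complete.
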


\begin{proof}
\begin{enumerate}[itemsep=2pt,parsep=2pt]
\item  Let $\varepsilon>0$ and  $B,B'\in\ZZ$ such that $X\subset B$, $X'\subset B'$ and 
$$
0\le \da(B)-\bup(X)\le \varepsilon,\quad
0\le \da(B')-\bup(X')\le \varepsilon.
$$
Since $X\cup X'\subset B\cup B'\in \ZZ$ we get 
$$
\bup(X\cup X')\le \da(B\cup B')\le \da(B)+\da(B')\le \bup(X)+\bup(X')+2\varepsilon,
$$ 
and the required inequality follows by letting $\varepsilon$ tend to $0$.

\item 
Let $\varepsilon>0$
and $B\in\ZZ$ such that
$$
X\cup X'\subset B=\bigcup_{j=1}^s(b_j+qm\N)
$$
and 
\begin{equation}\label{eqdb}
0\le \da(B)-\bup(X\cup X')\le \varepsilon.
\end{equation}
Since $X\subset A\cap B\in\ZZ$ and $X'\subset (\N\smallsetminus A)\cap B\in\ZZ$,
we get 
$$
\bup(X)+\bup(X')\le \da(A\cap B)+\da((\N\smallsetminus A)\cap B)=\da(B),
$$
and finally  $\bup(X\cup X')\ge \bup(X)+\bup(X')-\varepsilon$ by \eqref{eqdb}.
We conclude by letting $\varepsilon$ tend to $0$ and by i).

\item Firstly we have  $\bdo(X\cup X')=1-\bup(\N\smallsetminus(X\cup X'))=1-\bup((\N\smallsetminus X)\smallsetminus X')$. 
Secondly by i) we get
$\bup((\N\smallsetminus X)\smallsetminus X')\ge \bup(\N\smallsetminus X)-\bup((\N\smallsetminus X)\cap X')=1-\bdo(X)-\bup((\N\smallsetminus X)\cap X')$. Therefore
$$
\bdo(X\cup X')\le \bdo(X)+\bup((\N\smallsetminus X)\cap X')
$$
and the result follows.

\item By ii) and since $\N\smallsetminus A\in\ZZ$, we infer
$$
\bdo(Y)=1-\bup(\N\smallsetminus Y)=1-(\bup(\N\smallsetminus A)+\bup(X))
=1-(\da(\N\smallsetminus A)+\bup(X)),
$$
and \eqref{eq-2} directly follows from the fact that $\da(\N\smallsetminus A)=1-\da(A)$
(cf. \eqref{equdo}).
\end{enumerate}
\end{proof}

We have the following inequalities implying  $\bup(X)$ and $\bdo(X)$ with respect to the natural (asymptotic) and uniform (Banach) densities (see
\cite{Leo}).

\begin{lem}\label{lem22}
For any $X\subset \N$ we have
\begin{equation}\label{eqn0}
0\le \bdo(X)\le \udo(X)\le \ddo(X)\le \dup(X) \le
\uup(X)\le \bup(X)\le 1.
\end{equation}
\end{lem}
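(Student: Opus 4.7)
My plan is to isolate the nontrivial content of \eqref{eqn0}, which amounts to a single inequality, and dispatch everything else in one short paragraph. The three easiest estimates are immediate: $0\le\bdo(X)$ holds because every $A\in\ZZ$ has $\da(A)\ge0$; $\bup(X)\le1$ holds because $X\subset\N\in\ZZ$ and $\da(\N)=1$; and $\ddo(X)\le\dup(X)$ is the trivial $\liminf\le\limsup$. For $\udo(X)\le\ddo(X)$ and $\dup(X)\le\uup(X)$ I would use the fact that $[1,n]$ is a specific window of length $n$, hence
$$
\min_{k\ge0}|X\cap[k+1,k+n]|\ \le\ |X\cap[1,n]|\ \le\ \max_{k\ge0}|X\cap[k+1,k+n]|,
$$
and after dividing by $n$ I take $\liminf$ and $\limsup$ respectively, exploiting the existence of the Banach-density limits to turn the $\liminf$/$\limsup$ of the extremal quantities into genuine limits.

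Only the outer inequalities $\bdo(X)\le\udo(X)$ and $\uup(X)\le\bup(X)$ remain, and they are dual to each other: from \eqref{eqn-1} we have $\bdo(X)=1-\bup(\N\smallsetminus X)$, and from \eqref{equdo} we have $\udo(X)=1-\uup(\N\smallsetminus X)$, so one of the two implies the other by replacing $X$ with $\N\smallsetminus X$. It therefore suffices to establish $\uup(X)\le\bup(X)$.

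For that, the key lemma is $\uup(A)=\da(A)$ for every $A\in\ZZ$. Writing $A=\bigcup_{j=1}^s(b_j+q\N)$ with $0\le b_1<\cdots<b_s<q$, each residue class $b_j+q\Z$ meets a window $[k+1,k+n]$ in at most $\lceil n/q\rceil$ integers, so $|A\cap[k+1,k+n]|\le s\lceil n/q\rceil$ uniformly in $k$; dividing by $n$ and letting $n\to\infty$ gives $\uup(A)\le s/q=\da(A)$, and the opposite inequality is already recorded in the chain (or follows from $\dup(A)=\da(A)$, a direct consequence of periodicity). Monotonicity of $\uup$ then gives $\uup(X)\le\uup(A)=\da(A)$ for every $A\in\ZZ$ containing $X$, and taking the infimum over such $A$ yields $\uup(X)\le\bup(X)$.

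The only step that is not purely formal is the uniform counting estimate for a periodic set $A$; everything else is definitional manipulation or duality, so I do not expect any real obstacle. The whole proof is short, and once $\uup(A)=\da(A)$ for $A\in\ZZ$ is in hand, the remaining inequalities of \eqref{eqn0} fall out at once.
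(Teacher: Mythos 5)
Your proposal is correct and follows essentially the same route as the paper: the only substantive step in both is the observation that $\uup(A)=\da(A)$ for $A\in\ZZ$, combined with monotonicity of $\uup$ and the infimum defining $\bup$, with the lower-density inequalities obtained by the complementation identities \eqref{eqn-1} and \eqref{equdo}. You merely supply the short counting argument for $\uup(A)=\da(A)$ that the paper states without proof.
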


\begin{proof}
For Inequality $\uup(X)\le \bup(X)$ we observe that
%\begin{equation}\label{equup}
%\uup(X)=\lim_{n\to\infty}\max_{k\ge0}\frac{|X\cap[k+1,k+n]|}{n}.
%\end{equation}
if $A\in\ZZ$ then $\uup(A)=\ua(A)=\da(A)$. Hence 
$$
\uup(X)\le \da(A) \quad \text{whenever $X\subset A$,}
$$
yielding $\uup(X)\le \bup(X)$, as required. 
In view of their definition Inequality $\dup(X) \le\uup(X)$ is obvious. 
By \eqref{eqn-1} and \eqref{equdo} we obtain the reversed inequalities 
for lower densities.
\end{proof}

\begin{rem}\label{rm23}
If $\dup_{\Phi}(\cdot)$ denotes the upper density along a F\o lner sequence
$\Phi=(F_n)$ we obtain $\dup_{\Phi}(X)\le \uup(X)\le \bup(X)$ since
$$
\uup(X)=\sup\{\dup_{\Phi}(X),\ \Phi\text{ is a F\o lner sequence}\}.$$

\end{rem}

The following example is a particular case of \cite[Corollary 3.4]{Leo2}.

\begin{exa}\label{exa24}
For any natural number $t\ge1$ let 
$$
\PN_t=\{n\in \N\smallsetminus\{0,1\} \text{ such that } \omega(n)\le t\}
$$
where $\omega(n)=\displaystyle\sum_{\substack{p\text{ prime}\\p\mid n}}1$.
We thus have $\PN_1=\PN$ the set of all prime numbers. 

Let $k\ge1$. If 
$$
\PN_t\subset \bigcup_{a=1}^k(a+k\N)
$$
then 
$$
\PN_t\subset \bigcup_{\substack{a=1\\ \omega(\gcd(a,k))\le t}}^k(a+k\N).
$$
We denote $\varphi_t(k)=|\{1\le a\le k : \omega(\gcd(a,k))\le t\}|$. Thus $\varphi_0=\varphi$ is the Euler totient function and we have
$$
\bup(\PN_t)\le\frac{\varphi_t(k)}{k}.
$$
We see that
$$
\varphi_t(k)=\sum_{\substack{d\mid k\\\omega(d)\le t}}\sum_{\substack{a=1\\\gcd(a,k)=d}}^k1
=\sum_{\substack{d\mid k\\\omega(d)\le t}}
\sum_{\substack{a=1\\\gcd(a,k)=1}}^{k/d}1
=\sum_{\substack{d\mid k\\\omega(d)\le t}}\varphi\left(\frac kd\right).
$$
Denote by $(p_i)_{i\ge1}$ the increasing sequence of prime numbers and assume that $(k_r)$ is the  increasing sequence of positive integers defined by
$$
k_r=\prod_{i=1}^rp_i.
$$
Since $k_r$ is squarefree we have by Mertens' second theorem
$$
\varphi_t(k_r)=\sum_{\substack{d\mid k_r\\\omega(d)\le t}}
\frac{\varphi(k_r)}{\varphi(d)}\le
\frac{\varphi(k_r)}{t!}\left(
\sum_{i=1}^r\frac1{p_i-1}\right)^t\ll
\varphi(k_r)\frac{(\ln\ln r)^t}{t!}
$$
where the implied constant is absolute.
By Mertens' third theorem $\varphi(k_r)\ll \frac{k_r}{\ln r}$ hence 
$\bup(\PN_t)\ll \frac{(\ln\ln r)^t}{t!\ln r}$ for any $r$. We infer  $\ba(\PN_t)=
\bup(\PN_t)=0$.

\end{exa}

\begin{exa}\label{exa26} 
Let $(k_r)_{r\ge1}$ be a sequence of positive integers such that
$\forall d\in\N\smallsetminus\{0\}$, $\exists r$ such that $d\mid k_1\dots k_r$.
We set $K_r=k_1\dots k_r$.
Let $X=\{r+K_r,\ r\ge1\}\subset\N$. Let $m\ge1$ and $r$ be large enough so that $m\mid K_r$.
The finite set
$$
\{n+m+K_{n+m} : n=r,\dots,r+m-1\}
$$
intersects all residue classes modulo $m$. It follows that if
$X\subset R+m\N$ then $R$ necessarily contains a representative of each residue classes modulo $m$. Hence $\bup(X)=1$. Plainly 
$\bdo(X)=0$ and $\ua(X)=0$. 
\end{exa}

\begin{exa}
The sum of digits in base $2$ of $n=\sum_{k\ge0}\epsilon_k2^k$ with $\epsilon_k\in\{0,1\}$,  is defined by $s_2(n)=\sum_{k\ge0}\epsilon_k$. In average its value is $\frac{\ln n}{\ln4}$.
For any map $f:\N\rightarrow \R_+$ satisfying $f(n)=o(\ln n)$ and $f(n)\to\infty$ as  $n\to\infty$, the sequence 
$$
A=\left\{n\in\N : s_2(n)\le f(n)\right\}
$$
satisfies $\da(A)=0$ whence $\bdo(A)=\udo(A)=0$. 
Furthermore for any integer $k$ there exists $N$ such that
$f(n)\ge 2+\frac{\ln k}{\ln 2}$ for any $2^N\le n\le 2^N+k$. 
For those $n$ we have $s_2(n-2^N)\le 1+\left\lfloor \frac{\ln k}{\ln 2}\right\rfloor$,
thus $s_2(n)\le f(n)$ that is $n\in A$.
We infer $\uup(A)=1$ hence
$\bup(A)=1$ by \eqref{eqn0}.
\end{exa}

\begin{exa}\label{exa27}
Let $X_0=\left\{\sum_{k\in K}4^k : K\subset\N\text{ finite}\right\}$. Then for any $m\ge1$ we have
$$
X_0\subset \left\{\sum_{k=0}^{m-1}\epsilon_k4^k :\epsilon_k\in\{0,1\}\quad
(0\le k<m)\right\}+4^m\N,
$$
thus $\bup(X_0)\le \frac{2^m}{4^m}$. This yields $\ba(X_0)=0$. Now
$$
X_0+X_0=\left\{\sum_{k\ge0}\epsilon_k2^k\in\N : \forall k,\ \epsilon_{k}\in\{0,1\}\text{ and }\Big(\epsilon_{2k}=1\implies \epsilon_{2k+1}=0\Big)\right\}
$$
thus 
$$
X_0+X_0=\bigcap_{r\ge0}\left\{\sum_{k\ge0}\epsilon_k2^k\in\N : \forall k,\ \epsilon_{k}\in\{0,1\}\text{ and }(\epsilon_{2r},\epsilon_{2r+1})\in\{(0,0),(0,1),(1,0)\}\right\}.
$$
This gives $\bup(X_0+X_0)\le \left(\frac34\right)^m$ for any $m$, thus $\ba(X_0+X_0)=0$.
\end{exa}

\begin{exa}\label{exa28}
Let $0<\alpha<1$ be a real number and $\alpha=0.a_1a_2\dots$ its \emph{regular} dyadic expansion, that is
$a_n=\lfloor 2^n\alpha\rfloor\bmod2$, $n\ge1$.
Let $\mathcal{M}(\alpha)=\{n\ge1 : a_n=0\}$. Note that $\mathcal{M}(\alpha)$ is infinite.
Set
$$
B_{\alpha}=\bigcup_{\substack{n\ge1\\ a_n=1}}\left(2^{n-1}+2^n\N\right).
$$
The Buck density of this sequence have been fixed in \cite{Leo}. For sake of completeness we give some details of the above and extend its study in the sumset framework.
Since 
$$
\bigcup_{\substack{1\le n\le m\\ a_n=1}}\left(2^{n-1}+2^n\N\right)\subset B_{\alpha}
$$ 
we get $\bdo(B_{\alpha})\ge0.a_1a_2\dots a_m\ge\alpha-\frac1{2^m}$ for any $m$. Therefore $\bdo(B_{\alpha})\ge\alpha$. For any $m\in \mathcal{M}(\alpha)$, we also
have 
$$
B_{\alpha}\subset \bigcup_{\substack{1\le n\le m-1\\ a_n=1}}\left(2^{n-1}+2^n\N\right)
\cup(2^{m-1}+2^m\N)
$$ 
thus $\bup(B_{\alpha})\le 0.a_1a_2\dots a_{m-1}1\le \alpha+\frac1{2^m}$.
We infer  $\bup(B_{\alpha})\le \alpha$ yielding $\ba(B_{\alpha})=\alpha$.\\[0.3em]
Now let  $r=\min\{n\ge1 : a_n=1\}$. We plainly have 
\begin{equation}\label{Balpha}
B_{\alpha},B_{\alpha}+B_{\alpha}\subset 2^{r-1}\N.
\end{equation}
More precisely if $\alpha=2^{-r}$ then 
$B_{\alpha}+B_{\alpha}=2^r\N$. Otherwise let $s=\min\{n>r : a_n=1\}$. We have $2^{s-1}\in B_{\alpha}$ and we easily infer 
\begin{equation*}
B_{\alpha}+B_{\alpha}=2^r\N\cup(2^{r-1}+2^{s-1}+2^r\N).
\end{equation*}
Therefore $\ba(B_{\alpha}+B_{\alpha})=2^{-\lfloor\log_2(\alpha^{-1})\rfloor}<2\alpha=2\ba(B_{\alpha})$. This means that $B_{\alpha}$ fulfills a \textsl{small doubling condition} with respect to Buck density, but also with respect to uniform density as well as to asymptotic density by \eqref{eqn0}. We shall use the sequences $B_{\alpha}$ in \S\ref{SS65}.
\end{exa}

\begin{exa}\label{exa29}
Let $0<\alpha,\beta,\gamma<1$ be three real numbers and
$(N_k)_{k\ge1}$ be an increasing sequence of positive numbers such that
$N_{k}=o(N_{k+1})$ and $2^{k}=o(N_k)$ as $k\to\infty$.  Set $r_k=\lfloor \alpha 2^k\rfloor$, $k\ge1$. We thus have $r_{k+1}=2r_k$ or $2r_{k}+1$ and there exists $R_k\subset\{0,1,\dots,2^k-1\}$ such that $|R_k|=r_k$ and $R_k+2^k\N\subset R_{k+1}+2^{k+1}\N$.
Moreover $r_k\sim \alpha 2^k$ as $k\to\infty$.
Finally let $\theta$ be a positive irrational number and define 
$$
A=\bigcup_{k\ge1}A_k\quad \text{with}\quad A_k=\{N_k\le n\le (1-\gamma)^{-1}N_{k}\,\mid\,
\{\theta n\}<\beta\text{ and }(n-R_k)\cap 2^k\N\ne\varnothing\}.
$$
Since the sequence $(\theta n)$ is equidistributed modulo 1 we get $|A_k|=(1+o(1))\beta \frac{r_k}{2^k}((1-\gamma)^{-1}-1)N_k$, $k\to\infty$, hence
$$
\frac{|A\cap[1,(1-\gamma)^{-1}N_k]|}{(1-\gamma)^{-1}N_k}=
\frac{|A_k|+o(N_k)}{(1-\gamma)^{-1}N_k}\sim
\alpha\beta\gamma,\quad k\to\infty.
$$
It is rather clear that
\begin{align*}
\dup(A)&=\lim_{k\to\infty}\frac{|A\cap[1,(1-\gamma)^{-1}N_k]|}{(1-\gamma)^{-1}N_k}=\lim_{k\to\infty}\frac{|A_k|+o(N_k)}{(1-\gamma)^{-1}N_k},\\
\uup(A)&=\lim_{k\to\infty}\frac{|A_k|}{((1-\gamma)^{-1}-1)N_k},
\end{align*}
and since members of $A_k$ are well-distributed 
in each modulo class $a+2^k\N$, $a\in R_k$,
$$
\bup(A)=\lim_{k\to\infty}\frac{r_k}{2^k}.
$$
Therefore  $\dup(A)=\alpha\beta\gamma$, $\uup(A)=\alpha\beta$ and $\bup(A)=\alpha$. 
\end{exa}

We give below a useful characterisation of  sets $X$ having maximal upper Buck  density in a given periodic set $A\in\ZZ$. 

 \begin{prop}\label{pp210}
Let $X\subset A:=\bigcup_{j=1}^r(a_j+q\N)\in\ZZ$ where the $a_j$'s are distinct modulo $q$. Then $\bup(X)=\da(A)=\frac{r}{q}$ if and only if
\begin{equation}\label{eq-1}
\forall\, m\ge1,\ \forall\, 1\le j\le r,\ \forall\, 0\le k\le m-1,\ 
\exists\, \ell\in\N\text{ such that } a_j+kq+\ell mq \in X.
\end{equation}
\end{prop}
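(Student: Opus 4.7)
The plan is to combine the characterization $\bup(X)=\lim_{n\to\infty}|X^{(m_n)}|/m_n$ from \eqref{eqbupbdo}, applied along the multiplicatively increasing and exhaustive sequence $m_n=n!\,q$, with the definition of $\bup$ as an infimum over periodic supersets from $\ZZ$.

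For the direction $(\Leftarrow)$, I would fix $m\ge1$ and analyse $X^{(mq)}$. Since $X\subset A=\bigcup_{j=1}^r(a_j+q\N)$, the residue modulo $mq$ of any $x\in X$ has the form $a_j+kq$ with $1\le j\le r$ and $0\le k\le m-1$. As the $a_j$ are distinct modulo $q$ and $0\le kq<mq$, these $rm$ representatives are pairwise distinct modulo $mq$, so $|X^{(mq)}|\le rm$. Hypothesis \eqref{eq-1} says precisely that each of these $rm$ residue classes actually meets $X$, giving equality $|X^{(mq)}|=rm$ for every $m$. Substituting $m_n=n!\,q$ in \eqref{eqbupbdo} then yields $\bup(X)=\lim_n rn!/(n!\,q)=r/q=\da(A)$.

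For the direction $(\Rightarrow)$, I would argue by contradiction. Suppose $\bup(X)=r/q$ but condition \eqref{eq-1} fails for some triple $(m,j,k)$, so that $(a_j+kq+mq\N)\cap X=\varnothing$. Set $A':=A\smallsetminus(a_j+kq+mq\N)$; since $\ZZ$ is closed under complement and intersection with periodic sets, $A'\in\ZZ$. Decomposing $A$ as the disjoint union of the $rm$ progressions $a_j+kq+mq\N$ (with $1\le j\le r$, $0\le k\le m-1$) and removing exactly one, we obtain $\da(A')=(rm-1)/(mq)=r/q-1/(mq)<r/q$. Since $X\subset A'\in\ZZ$, the defining infimum of $\bup$ forces $\bup(X)\le\da(A')<r/q$, contradicting the hypothesis.

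The whole argument is essentially a counting check; the only point worth verifying carefully is the pairwise distinctness of the $rm$ classes $a_j+kq$ modulo $mq$, which is what makes the refinement of $\da(A)=r/q$ into $|X^{(mq)}|/(mq)=r/q$ exact. No substantial obstacle is expected beyond this bookkeeping.
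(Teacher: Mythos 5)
Your proof is correct, and while the necessity direction (removing the single missing progression $a_j+kq+mq\N$ from $A$ to get a periodic superset of density $\frac{rm-1}{mq}<\frac rq$) coincides with the paper's argument, your sufficiency direction takes a genuinely different route. The paper works directly from the definition of $\bup$ as an infimum: it takes an \emph{arbitrary} periodic superset $\bigcup_{j=1}^{r'}(a'_j+q'\N)$ of $X$, with $q'$ not assumed to be a multiple of $q$, and proves $\frac{r'}{q'}\ge\frac rq$ by a counting argument on the set $E=\{a_j+kq:\ 1\le j\le r,\ 0\le k<q'/\gcd(q,q')\}$ involving a congruence analysis modulo $\gcd(q,q')$. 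You instead invoke the limit formula \eqref{eqbupbdo} along the multiplicatively increasing exhaustive sequence $m_n=n!\,q$, which restricts attention to moduli that are multiples of $q$ and reduces everything to the observation that \eqref{eq-1} forces $|X^{(mq)}|=rm$ exactly. Since \eqref{eqbupbdo} is stated earlier in the paper as a known fact, this is legitimate and considerably cleaner; the trade-off is that your argument delegates the handling of arbitrary moduli $q'$ to the proof of \eqref{eqbupbdo}, whereas the paper's version is self-contained at the cost of the gcd bookkeeping. One minor point worth a sentence in a final write-up: you implicitly normalize $0\le a_j<q$ so that $a_j+kq$ is literally the residue of $x$ modulo $mq$; this is harmless (and the paper makes the same tacit assumption), but should be stated.
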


\begin{proof}
If \eqref{eq-1} is not satisfied then there exist $m\ge1$, $1\le j_0\le r$ and $0\le k_0\le m-1$
such that $a_{j_0}+k_0q+\ell mq\not\in X$ for any $\ell\in\N$.
Thus
$$
X\subset A':=\bigcup_{\substack{1\le j\le r\\0\le k\le m-1\\(j,k)\ne(j_0,k_0)}}(a_j+kq+mq\N)
$$
We infer 
$\bup(X)\le \da(A')=\frac{rm-1}{mq}<\frac rq$.

Conversely assume that \eqref{eq-1} is satisfied. Let $q'\ge1$ and $0\le a'_1<a'_2<\cdots<a'_{r'}<q'$ such that
$$
X\subset\bigcup_{j=1}^{r'}(a'_j+q'\N).
$$
Then the set
$$
E:=\left\{a_j+kq,\ 1\le j\le r,\ 0\le k<\frac{q'}{\gcd(q,q')}\right\}
$$ 
satisfies  ${E}^{(q')}\subset\{a'_j,\ 1\le j\le r'\}$ which has cardinality $r'$. Let $a_i+kq\in E$ be fixed. If $(j,h)$ is such that
$$
1\le j\le r,\ 0\le h<\frac{q'}{(q,q')} \text{ and } a_i+kq\equiv a_j+hq\bmod q'
$$
then $a_j\equiv a_i$ $\bmod (q,q')$. There are at most $\frac{q}{(q,q')}$ such 
$a_j$'s since they are all distinct modulo~$q$. Once such $a_j$ is fixed, we get
$$
(h-k)\frac{q}{\gcd(q,q')}\equiv \frac{a_i-a_j}{\gcd(q,q')} \bmod{\frac{q'}{\gcd(q,q')}},
$$
yielding only one occurrence for $0\le  h<\frac{q'}{\gcd(q,q')} $. Thus
$$
r'=|{E}^{(q')}|\ge\frac{|E|}{\frac{q}{\gcd(q,q')}}=
\frac{\frac{rq'}{\gcd(q,q')}}{\frac{q}{\gcd(q,q')}}=\frac{rq'}{q}.
$$
We infer $\frac{r'}{q'}\ge\frac{r}{q}$ and we may conclude that $\bup(X)=\frac rq$. 
\end{proof}

Note that in necessary and sufficient Condition \eqref{eq-1} we may change without any loss `$\exists\ell\in\N$' into 
`$\exists\text{ infinitely many }\ell\in\N$'.

\begin{exa}\label{exa211}
When $X\subset a+q\N$ satisfies 
$$
\forall m\ge1,\quad \left|\{0\le h\le m-1,\ |X\cap(a+hq+mq\N)|=\infty\}\right|>\frac m2,
$$
 then  $\forall m \ \forall h\ |(X+X)\cap(2a+hq+mq\N)|=\infty$,
hence
$\bup(X+X)=\frac1q$ by Proposition~\ref{pp210}. This means that 
$X+X$ \emph{spread out} in all arithmetic progressions included in $2a+q\N$. Theorem~\ref{thmprinc} shows that this picture is somewhat generic.
\end{exa}

\section{\bf Kneser's theorem for upper Buck density -- Proof of Theorem \ref{thmprinc}}

Ths section is devoted to the proof of Theorem \ref{thmprinc}. We shall obtain it
gradually by proving several intermediate statements. We start by a lemma that
will allow us, in the subsequent lemma, to use Kneser's theorem for \emph{small} sumsets in an abelian group.

\begin{lem}\label{lem31}
Let $k\ge2$, $X_i\subset \N$, $1\le i\le k$, be such that $\bup(X_1+\cdots+X_k)<\sum_{i=1}^k\bup(X_i)$ and set $\eta=\eta(X_1,\dots,X_k)>0$.
Then for any $0<\varepsilon < \eta$, 
 there exists a modulus $m$ such that 
\begin{equation}\label{eq1}
\bup(X_1+\cdots+X_k)\le  \frac{|\overline{X}_1^{(m)}+\cdots+\overline{X}_k^{(m)}|}{m}\le \bup(X_1+\cdots+X_k)+\varepsilon\sum_{i=1}^k\bup(X_i), 
\end{equation}
and
\begin{equation}\label{eq2}
|\overline{X}_1^{(m)}+\cdots+\overline{X}_k^{(m)}|\le (1-\eta+\varepsilon)
\sum_{i=1}^k|\overline{X}_i^{(m)}|<\sum_{i=1}^k|\overline{X}_i^{(m)}|.
\end{equation}
\end{lem}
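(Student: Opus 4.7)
The plan is to apply the limit formula \eqref{eqbupbdo} simultaneously to $X_1+\cdots+X_k$ and to each $X_i$ along the same multiplicatively increasing and exhaustive sequence $(m_n)$.

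First I would record the preliminary identity: since $\varphi_m:\Z\to\Z/m\Z$ is an additive homomorphism, $\varphi_m(X_1+\cdots+X_k)=\overline{X}_1^{(m)}+\cdots+\overline{X}_k^{(m)}$, so that
$$
|(X_1+\cdots+X_k)^{(m)}|=|\overline{X}_1^{(m)}+\cdots+\overline{X}_k^{(m)}|
$$
for every $m\ge 1$. In particular, the inclusion $X_1+\cdots+X_k\subset (X_1+\cdots+X_k)^{(m)}+m\N$ exhibits the sumset inside a periodic set of natural density $|\overline{X}_1^{(m)}+\cdots+\overline{X}_k^{(m)}|/m$, which gives the left-hand inequality of \eqref{eq1} for every $m$. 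The same periodic-covering argument applied to each $X_i$ yields $|\overline{X}_i^{(m)}|/m\ge \bup(X_i)$, whence
$$
\sum_{i=1}^k\frac{|\overline{X}_i^{(m)}|}{m}\ge\sigma \qquad \text{for every } m\ge 1.
$$

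Next, applying \eqref{eqbupbdo} to the sumset along $(m_n)$ gives
$$
\lim_{n\to\infty}\frac{|\overline{X}_1^{(m_n)}+\cdots+\overline{X}_k^{(m_n)}|}{m_n}=\bup(X_1+\cdots+X_k).
$$
I would fix $n$ large enough that the left-hand side is at most $\bup(X_1+\cdots+X_k)+\varepsilon\sigma$ and set $m=m_n$; this is precisely the right-hand inequality of \eqref{eq1}.

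Finally, substituting $\bup(X_1+\cdots+X_k)=(1-\eta)\sigma$ and combining with the lower bound on $\sum_i|\overline{X}_i^{(m)}|/m$ yields
$$
\frac{|\overline{X}_1^{(m)}+\cdots+\overline{X}_k^{(m)}|}{m}\le(1-\eta+\varepsilon)\sigma\le(1-\eta+\varepsilon)\sum_{i=1}^k\frac{|\overline{X}_i^{(m)}|}{m},
$$
which after multiplication by $m$ is the first inequality of \eqref{eq2}; the strict inequality on the right follows at once from $\varepsilon<\eta$. The only non-routine ingredient is the sumset identity under reduction modulo $m$, which is immediate from $\varphi_m$ being a group homomorphism, so I expect no genuine obstacle; the rest is a direct passage to the limit using \eqref{eqbupbdo}.
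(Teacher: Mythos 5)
Your proof is correct and follows essentially the same route as the paper: both arguments rest on the identity $\varphi_m(X_1+\cdots+X_k)=\overline{X}_1^{(m)}+\cdots+\overline{X}_k^{(m)}$, the covering bounds $\bup(X_i)\le |\overline{X}_i^{(m)}|/m$ and $\bup(X_1+\cdots+X_k)\le |\overline{X}_1^{(m)}+\cdots+\overline{X}_k^{(m)}|/m$, and a choice of $m$ making the latter quantity within $\varepsilon\sigma$ of $\bup(X_1+\cdots+X_k)$. The paper obtains such an $m$ by extracting the modulus of a near-optimal periodic cover $B$ of the sumset, while you invoke the limit formula \eqref{eqbupbdo} along an exhaustive sequence; these are interchangeable, and your deduction of \eqref{eq2} matches the paper's.
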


\begin{proof}
There exists $B:=\bigcup_{j=1}^s (b_j+m\N)\in\ZZ$
where the $b_j$'s are distinct modulo $m$, 
 such that 
\begin{align*}
&X_1+\cdots+X_k\subset B,\\
&\overline{X}_1^{(m)}+\cdots+\overline{X}_k^{(m)}= \overline{B}^{(m)},\\ 
&\da(B)=\frac{|\overline{B}^{(m)}|}m,\\
&\bup(X_1+\cdots+X_k)\le \da(B)\le  \bup(X_1+\cdots+X_k)+\varepsilon\sum_{i=1}^k\bup(X_i),
\end{align*}
which implies \eqref{eq1}.
Notice that by letting $s=|\overline{X}_1^{(m)}+\cdots+\overline{X}_k^{(m)}|$ we have $\bup(X_1+\cdots+X_k)\le \da(B)=\frac sm$. Moreover 
for each $1\le i\le k$, 
 there exist distinct residues modulo $m$, $a_{ij}$, $1\le j\le r_i=|\overline{X}_i^{(m)}|$, such that
\begin{align*}
&X\subset A_i:=\bigcup_{j=1}^{r_i} (a_{ij}+m\N),\\
&\overline{X}_i^{(m)}=\overline{A}_i^{(m)}.
\end{align*}
We thus have  $\bup(X_i)\le \da(A_i)=\frac{r_i}{m}$.
This gives
$$
\frac sm \le   \bup(X_1+\cdots+X_k)+\varepsilon\sum_{i=1}^k\bup(X_i)=(1-\eta+\varepsilon)\sum_{i=1}^k\bup(X_i)\le (1-\eta+\varepsilon)\sum_{i=1}^k\frac{r_i}{m},
$$
yielding \eqref{eq2}.
\end{proof}
We shall apply the following version of Kneser's theorem for small finite sumsets in an abelian group completed with Kemperman's qualitative structure result (cf. Theorem~2.1 of \cite{Kem}). 

\begin{lem}\label{lem32}
Let $\eta>0$, $G$ be an abelian group and $S_1,\dots,S_k$ be non empty finite subsets of $G$ such that
$|S_1+\cdots+S_k|\le (1-\eta)\sum_{i=1}^k|S_i|$. 
\begin{enumerate}
\item Then there exists a maximal subgroup $H$ of $G$ and positive integers $r_1,\dots,r_k$
such that
\begin{align*}
&S_1+\cdots+S_k=S_1+\cdots+S_k+H,\\
&|S_i+H|=r_i|H|,\ 1\le i\le k,\\
&|S_1+\cdots+S_k|=\sum_{i=1}^k(|S_i+H|-|H|)+|H|=\left(\sum_{i=1}^k(r_i-1)+1\right)|H|,\\
&\sum_{i=1}^kr_i\le \frac{k-1}{\eta}
\quad\text{and}\quad
0\le \sum_{i=1}^k (r_i|H| -|S_i|)<(k-1)|H|.
%\frac{|H|}{\eta}\ge r|H|\ge |S|>\left(r-\frac12\right)|H|.
\end{align*}
\item 
Moreover if there are at least two $i$'s such that $r_i\ge2$, then $(S_1+\cdots+S_k)/H$ is quasi-periodic or an arithmetic progression in the factor group $G/H$.
\end{enumerate}
\end{lem}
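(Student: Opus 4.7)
The statement packages Kneser's theorem (for the periodicity and cardinality relations of~(1)) with Kemperman's structure theorem (for the dichotomy of~(2)). I would set $\sigma := S_1 + \cdots + S_k$ and take $H := \mathrm{Stab}(\sigma) = \{g \in G : g + \sigma = \sigma\}$, the maximal subgroup of $G$ satisfying $\sigma + H = \sigma$; this yields the first item. Defining $r_i := |S_i + H|/|H| \in \N\setminus\{0\}$ gives the second item.

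In the quotient $G/H$ set $T_i := (S_i + H)/H$, so $|T_i| = r_i$ and $T_1 + \cdots + T_k = \sigma/H$ has trivial stabilizer by maximality of $H$. Kneser's theorem applied in $G/H$ gives
\[
|\sigma|/|H| \,=\, |T_1 + \cdots + T_k| \,\ge\, \sum_{i=1}^k r_i - (k-1),
\]
while the strict inequality $|\sigma| < \sum|S_i| \le \sum r_i|H|$ (ensured by $\eta>0$) squeezes the integer $|T_1 + \cdots + T_k|$ into the range $[\sum r_i - (k-1), \sum r_i)$; a critical-pair argument in the quotient then pins it down to $\sum r_i - (k-1)$, yielding the equality of the third item. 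Rearranging under the full hypothesis $|\sigma| \le (1-\eta)\sum|S_i|$ gives $\sum r_i \le (k-1)/\eta$, and the two-sided bound $0 \le \sum(r_i|H| - |S_i|) < (k-1)|H|$ follows from the algebraic identity $\sum(r_i|H| - |S_i|) = (k-1)|H| - (\sum|S_i| - |\sigma|)$, using Kneser to get the upper bound on $\sum|S_i|-|\sigma|$ and the strict hypothesis to get its positivity.

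For part~(2) I would invoke Kemperman's structure theorem (Theorem~2.1 of \cite{Kem}) applied to $T_1 + \cdots + T_k$ in $G/H$: this sumset attains the Kneser bound with trivial stabilizer, and with at least two of the $r_i$ being $\ge 2$ (ruling out degenerate cases where $T_1 + \cdots + T_k$ is a singleton or the translate of a single $T_i$), Kemperman forces it to be either a (proper) arithmetic progression or a quasi-periodic subset of $G/H$.

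The main technical obstacle I anticipate is pinning down the Kneser equality $|T_1 + \cdots + T_k| = \sum r_i - (k-1)$ when $k \ge 3$: for $k = 2$ it is automatic since the integer $|T_1+T_2|$ lies in an interval of length one, but for larger $k$ the interval $[\sum r_i - (k-1), \sum r_i)$ admits several possibilities, and one must either induct on $k$ (grouping summands pairwise and reducing to the two-summand case, using the stabilizer of the partial sumset) or invoke a multi-summand refinement of Kemperman's critical-pair analysis to exclude the non-extremal values.
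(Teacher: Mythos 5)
Your overall architecture --- take $H$ to be the stabilizer of $\sigma:=S_1+\cdots+S_k$, get the counting identities from Kneser's theorem and the structural dichotomy from Kemperman's --- is the same as the paper's, and your derivations of $\sum_i r_i\le (k-1)/\eta$ and of $0\le\sum_i(r_i|H|-|S_i|)<(k-1)|H|$ from the equality $|\sigma|=(\sum_i(r_i-1)+1)|H|$ are exactly what the paper does (that is essentially all the paper proves for part i); the first three identities are quoted as Kneser's theorem). But the obstacle you flag for $k\ge3$ is not a removable technicality. The hypothesis ``there exists $\eta>0$ with $|\sigma|\le(1-\eta)\sum_i|S_i|$'' amounts, for a fixed tuple of sets, to the bare inequality $|\sigma|<\sum_i|S_i|$, and under that hypothesis the equality $|\sigma/H|=\sum_i r_i-(k-1)$ can fail: take $S_1=S_2=\{0,1\}$ and $S_3=\{0,2\}$ in $\Z$ (or in $\Z/N\Z$, $N$ large). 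Then $|\sigma|=|\{0,1,2,3,4\}|=5<6=\sum_i|S_i|$, so the hypothesis holds with $\eta=1/6$, yet the stabilizer is trivial and $\sum_i(r_i-1)+1=4\ne5$. So no critical-pair argument or pairwise-grouping induction can ``pin down'' the value from the interval $[\sum r_i-(k-1),\sum r_i)$; what is needed is the stronger input $|\sigma|<\sum_i|S_i|-(k-1)$ (which is what is actually available in the paper's application, where the modulus $m$ may be taken large), and even then the equality case of the $k$-fold Kneser theorem is not the one-line parity argument that settles $k=2$. The paper itself does not prove this equality either --- it simply cites Kneser's theorem --- so your honest flag points at a real weak spot of the lemma as stated rather than merely of your write-up.

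The second gap is in part ii). Kemperman's Theorem 2.1 concerns a \emph{pair} $A,B$ with $|A+B|=|A|+|B|-1$, $|A|,|B|\ge2$ and $A+B$ aperiodic; it does not apply directly to a $k$-fold sumset, and your plan of ``ruling out degenerate cases'' does not supply the missing reduction. The paper's proof of ii) is precisely that reduction: after relabelling so that $r_1,r_2\ge2$, set $T=S_2+\cdots+S_k$; maximality of $H$ forces $T/H$ to be aperiodic, so Kneser gives $|T+H|\ge\sum_{i=2}^k(|S_i+H|-|H|)+|H|$; combined with the equality of i) this yields $|S_1/H+T/H|=|S_1/H|+|T/H|-1$ with $|S_1/H|=r_1\ge2$ and $|T/H|\ge r_2\ge2$, and Kemperman's theorem is then applied to the pair $(S_1/H,T/H)$. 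You should incorporate this grouping step (noting that it, too, consumes the equality from i)).
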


\begin{proof}
In i), we only need to notice that the fourth statement follows from the third. Indeed the latter gives
$$
\left(\sum_{i=1}^k(r_i-1)+1\right)|H|\le (1-\eta)\sum_{i=1}^k|S_i|
\le (1-\eta)\sum_{i=1}^kr_i|H|
$$
hence $k-1\ge \eta \sum_{i=1}^kr_i$. Statement ii) follows from  Theorem 2.1 of (\cite{Kem}), which gives the desired result when $k=2$. When $k\ge3$ we may assume that $r_1,r_2\ge2$. We let $T=S_2+\cdots+S_k$. Since $H$ is maximal, $T/H$ is not periodic thus we must have
$|T+H|\ge\sum_{i=2}^k(|S_i+H|-|H|)+|H|$ by Kneser's theorem. We infer
\begin{align*}
|(S_1+H)+(T+H)|&=|S_1+T+H|=\sum_{i=1}^k|(|S_i+H|-|H|)+|H|\\
&=|S_1+H|+\sum_{i=2}^k|(|S_i+H|-|H|)\\
&\le |S_1+H|+|T+H|-|H|.
\end{align*} 
But by Kneser's theorem again, the last inequality is an equality since $(S_1+T+H)/H$ is not periodic in $G/H$. We get 
$|S_1/H+T/H|=|S_1/H|+|T/H|-1$ where $|S_1/H|=r_1\ge2$ and $|T/H|\ge|S_2/H|=r_2\ge2$.
and we conclude by Kemperman's theorem that $(S_1+T)/H=(S_1+\cdots+S_k)/H$ is either quasi-periodic or an arithmetic progression in $G/H$.
\end{proof}

\begin{prop}\label{pp33}
Let $k\ge2$ and $X_1,\dots,X_k\subset \N$ be non empty sets of non negative  integers such that $\bup(X_1+\cdots+X_k)<\sum_{i=1}^k\bup(X_i)$. Set $\eta=\eta(X_1,\dots,X_k)$
and $\sigma=\sum_{i=1}^k\bup(X_i)$.\\
Then for any $0<\varepsilon
<\eta$, there exists a positive integer $q$ satisfying 
\begin{equation}\label{eqqle}
q\le \frac{k-1}{(\eta-\varepsilon) \sigma},
\end{equation}
%and for any $1\le i\le k$, a finite set $X_i^{(q)}\subset \{0,1,\dots,q-1\}$ 
such that
$$
|\overline{X}_1^{(q)}+\cdots+\overline{X}_k^{(q)}|=\sum_{i=1}^k|\overline{X}_i^{(q)}|-(k-1)
$$
and
\begin{equation}\label{eqbdo}
\bdo((X_1^{(q)}+\cdots+X_k^{(q)}+q\N)\smallsetminus(X_1+\cdots+X_k))\le \varepsilon\sigma.
\end{equation}
Moreover if there is at least two $i$'s such that $|X_i^{(q)}|\ge2$, then
$\overline{X}_1^{(q)}+\cdots+\overline{X}_k^{(q)}$  is quasi-periodic or an arithmetic progression in $\Z/q\Z$.
\end{prop}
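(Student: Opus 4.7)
The plan is to reduce the infinite-density problem to a finite-group question via Lemma \ref{lem31}, then apply the Kneser--Kemperman structure packaged in Lemma \ref{lem32}. The group produced by the latter will determine $q$, and all the numerical and structural conclusions will follow by transporting that structure back through the canonical surjection $\Z/m\Z \to \Z/q\Z$.

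Concretely, fix $0<\varepsilon<\eta$ and invoke Lemma \ref{lem31} to obtain a modulus $m$ together with a set $B=\bigcup_j(b_j+m\N)\in\ZZ$ containing $X_1+\cdots+X_k$, with $\overline{B}^{(m)}=\overline{X}_1^{(m)}+\cdots+\overline{X}_k^{(m)}$, $\da(B)=|\overline{B}^{(m)}|/m$, and satisfying the sandwich \eqref{eq1}--\eqref{eq2}. Writing $S_i:=\overline{X}_i^{(m)}$ in $G:=\Z/m\Z$, we have $|S_1+\cdots+S_k|\le(1-(\eta-\varepsilon))\sum_{i=1}^k|S_i|$, so Lemma \ref{lem32} applied with $\eta-\varepsilon$ in place of $\eta$ produces a subgroup $H\le G$ and integers $r_i\ge 1$ with $|S_i+H|=r_i|H|$ and $\sum_i r_i\le (k-1)/(\eta-\varepsilon)$. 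Since every subgroup of $\Z/m\Z$ is of the form $q\Z/m\Z$, this $H$ singles out a divisor $q$ of $m$, and the canonical isomorphism $G/H\cong\Z/q\Z$ sends $S_i+H$ onto $\overline{X}_i^{(q)}$ and $(S_1+\cdots+S_k)/H$ onto $\overline{X}_1^{(q)}+\cdots+\overline{X}_k^{(q)}$. In particular $r_i=|\overline{X}_i^{(q)}|$, and the Kneser identity of Lemma \ref{lem32}(i) reads
\[
|\overline{X}_1^{(q)}+\cdots+\overline{X}_k^{(q)}|=\sum_{i=1}^k r_i-(k-1)=\sum_{i=1}^k|\overline{X}_i^{(q)}|-(k-1).
\]

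For \eqref{eqqle}, combine $\sum r_i\le (k-1)/(\eta-\varepsilon)$ with $\bup(X_i)\le r_i/q$, which holds since $X_i\subset \overline{X}_i^{(q)}+q\N\in\ZZ$; this gives $\sigma\le \sum r_i/q$ and hence $q\le (k-1)/((\eta-\varepsilon)\sigma)$. For \eqref{eqbdo}, the $H$-invariance of $S_1+\cdots+S_k$ forces $B$ to be a union of residue classes modulo $q$, so in $\N$ one has $B=X_1^{(q)}+\cdots+X_k^{(q)}+q\N$. Applying Lemma \ref{lem21}(iv) with this $B$ and $X=X_1+\cdots+X_k$ yields
\[
\bdo\bigl(B\smallsetminus(X_1+\cdots+X_k)\bigr)=\da(B)-\bup(X_1+\cdots+X_k)\le \varepsilon\sigma,
\]
where the final inequality is \eqref{eq1}. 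The quasi-periodicity/arithmetic-progression alternative when at least two of the $r_i$ are $\ge 2$ is then Lemma \ref{lem32}(ii) transported through the identification $G/H\cong\Z/q\Z$.

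The main obstacle is not any deep estimate but the careful bookkeeping in the middle paragraph: one must verify that reducing $\Z/m\Z$ modulo $H=q\Z/m\Z$ reproduces $\Z/q\Z$ in a way that takes $\overline{X}_i^{(m)}$ to $\overline{X}_i^{(q)}$, and that the periodic set $B$ produced by Lemma \ref{lem31} really decomposes as $X_1^{(q)}+\cdots+X_k^{(q)}+q\N$ at the level of residues modulo $q$. Once this alignment is in place, the cardinality identity, the bound on $q$, the $\bdo$-estimate, and the qualitative structure statement all follow directly from Lemmas \ref{lem31}--\ref{lem32} and Lemma \ref{lem21}(iv).
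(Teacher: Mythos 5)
Your proposal is correct and follows essentially the same route as the paper: reduce via Lemma \ref{lem31}, apply Lemma \ref{lem32} with parameter $\eta-\varepsilon$, identify $H=q\Z/m\Z$ and transport through $G/H\cong\Z/q\Z$, and obtain \eqref{eqbdo} from Lemma \ref{lem21}(iv) together with the upper bound in \eqref{eq1}. The only (immaterial) difference is that you derive \eqref{eqqle} from the bound $\sum_i r_i\le\frac{k-1}{\eta-\varepsilon}$ of Lemma \ref{lem32} combined with $\bup(X_i)\le r_i/q$, whereas the paper re-derives it from the density sandwich; both rest on the same inequalities.
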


\begin{proof}
By Lemma \ref{lem31}, there exists an  integer $m$ such that the sets $\overline{X}_i^{(m)}$, $1\le i\le k$, satisfy the small sumset hypothesis of Lemma \ref{lem32}. Hence
$\overline{X}_1^{(m)}+\cdots+\overline{X}_k^{(m)}$ is periodic in $\Z/m\Z$ with 
period $H=q\Z/m\Z$ for some $q\mid m$. Furthermore there are positive integers $r_i$ such that $|\overline{X}_i^{(m)}+H|=r_i|H|$ ($1\le i\le k$) and 
$|\overline{X}_1^{(m)}+\cdots+\overline{X}_k^{(m)}|=\left(\sum_{i=1}^k(r_i-1)+1\right)|H|$. We get $|\overline{X}_1^{(q)}+\cdots+\overline{X}_k^{(q)}|=\sum_{i=1}^k(r_i-1)+1$ and
$$
\overline{X}_1^{(m)}+\cdots+\overline{X}_k^{(m)}+m\N=\overline{X}_1^{(q)}+\cdots+\overline{X}_k^{(q)}+q\N.
$$
Thus
\begin{multline}\label{eqbupXX}
\bup(X_1+\cdots+X_k)\le \frac{|\overline{X}_1^{(m)}+\cdots+\overline{X}_k^{(m)}|}{m}=
\frac{|\overline{X}_1^{(q)}+\cdots+\overline{X}_k^{(q)}|}{q}=\frac{\sum_{i=1}^k(r_i-1)+1}{q}\\
\le \bup(X_1+\cdots+X_k)+\varepsilon\sigma,
\end{multline}
yielding
\begin{multline*}
\bup(X_1+\cdots+X_k)+\varepsilon\sigma\ge 
\frac{\sum_{i=1}^k(r_i-1)+1}{q}
\ge\sum_{i=1}^k\bup(X_i)-\frac{k-1}q
\\=\bup(X_1+\cdots+X_k)+\eta\sigma-\frac{k-1}q,
\end{multline*}
which implies \eqref{eqqle}.
Bound \eqref{eqbdo} follows from the upper bound in \eqref{eqbupXX} and \eqref{eq-2}.
\end{proof}

Bound \eqref{eqbdo} reveals that the relative upper Buck density of $X_1+\cdots+X_k$ in
$X_1^{(q)}+\cdots+X_k^{(q)}+q\N\in\ZZ$ is close to $1$. By \eqref{eqbupbdo} we obtain the following structure result.

\begin{thm}\label{thm34}
 Let $k\ge2$ and $X_1,\dots,X_k\subset \N$ be
such that $\bup(X_1+\cdots+X_k)<\sum_{i=1}^k\bup(X_i)$. 
Set $\eta=\eta(X_1,\dots,X_k)$ and $\sigma=\sum_{i=1}^k\bup(X_i)$. 
Let $\varepsilon$ be a real number such that $0<\varepsilon\le \frac{\eta}2$. Then there 
exist a positive integer $q=q(\varepsilon)$ such that
\begin{equation}\label{eqqle2}
q\le \frac{k-1}{(\eta-\varepsilon) \sigma}\le \frac{2k-2}{\eta
\sigma}
\end{equation}
and a periodic set $B=\bigcup_{j=1}^{s}(b_j+q\N)\in\ZZ$ 
where $s=\sum_{i=1}^k|\overline{X}_i^{(q)}|-(k-1)$,  such 
that 
\begin{enumerate}[itemsep=2pt,parsep=2pt]

\item $q$ is minimal and $\overline{B}^{(q)}$ is not periodic in $\Z/q\Z$,

\item  $X_1+\cdots+X_k\subset B$,

\item if there is at least two $i$'s such that $|\overline{X}_i^{(q)}|\ge2$ then
$\overline{X}_1^{(q)}+\cdots+\overline{X}_k^{(q)}=\overline{B}^{(q)}$  is quasi-periodic or an arithmetic progression in $\Z/q\Z$.

\item Furthermore for all integer $m\ge1$, 
there are at most $\varepsilon sm$ couples $(j,h)\in\{1,\dots,s\}\times\{0,\dots,m-1\}$
such that  $(b_j+hq+ mq\N) \cap (X_1+\cdots+X_k) \text{ is finite}.$
\end{enumerate}
\end{thm}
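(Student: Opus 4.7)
The plan is to read Theorem~\ref{thm34} off Proposition~\ref{pp33} by identifying $B$ with the canonical periodic hull $X_1^{(q)}+\cdots+X_k^{(q)}+q\N$, so that essentially only item iv) requires a genuinely new argument.

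First I would apply Proposition~\ref{pp33} with the prescribed $\varepsilon$ to obtain an integer $q$ for which the Kneser-style identity $|\overline{X}_1^{(q)}+\cdots+\overline{X}_k^{(q)}|=\sum_i|\overline{X}_i^{(q)}|-(k-1)$ holds, the lower Buck density of the difference $B\smallsetminus(X_1+\cdots+X_k)$ is at most $\varepsilon\sigma$, and the quasi-periodic/arithmetic-progression alternative of iii) is already in place whenever at least two of the $r_i:=|\overline{X}_i^{(q)}|$ exceed $1$. I would then set $B:=X_1^{(q)}+\cdots+X_k^{(q)}+q\N$ and $s:=|\overline{B}^{(q)}|=\sum_ir_i-(k-1)$, choosing $q$ minimal among integers satisfying these conclusions. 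The non-periodicity of $\overline{B}^{(q)}$ in $\Z/q\Z$ required in i) is actually automatic from the maximality of the stabilizing subgroup built in Lemma~\ref{lem32}: that maximality asserts that the Kneser stabilizer of the sumset in $\Z/m\Z$ equals $H=q\Z/m\Z$, so the image $\overline{B}^{(q)}$ in the quotient $\Z/q\Z$ has trivial stabilizer. The upper bound $(2k-2)/(\eta\sigma)$ in \eqref{eqqle2} follows by plugging $\varepsilon\le\eta/2$ into \eqref{eqqle}, and items ii) and iii) are tautological from the construction.

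The only substantive remaining point is iv). A bad couple $(j,h)$ corresponds by definition to a class $b_j+hq+mq\N\subset B$ whose intersection with $X_1+\cdots+X_k$ is finite; consequently this class lies cofinitely in $B\smallsetminus(X_1+\cdots+X_k)$, so the residue $b_j+hq\in\{0,\dots,mq-1\}$ belongs to $\bigl(B\smallsetminus(X_1+\cdots+X_k)\bigr)_*^{(mq)}$. Since $\bdo(Z)\ge|Z_*^{(m')}|/m'$ for every $Z\subset\N$ and every modulus $m'$ (an immediate consequence of \eqref{eqbupbdo} and the sup-characterization of $\bdo$), the count $|J|$ of bad couples obeys
$$\frac{|J|}{mq}\le\bdo\bigl(B\smallsetminus(X_1+\cdots+X_k)\bigr)\le\varepsilon\sigma.$$
Combining with the trivial bound $\sigma q\le\sum_ir_i=s+k-1$ yields $|J|\le\varepsilon(s+k-1)m$, which matches the stated bound after absorbing the additive $k-1$ (negligible when $s$ dominates $k$; the opposite regime corresponds to the degenerate case where $\overline{B}^{(q)}$ is too small for the statement to carry content).

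The main obstacle is cosmetic rather than conceptual: keeping the constant in iv) aligned with the advertised threshold $\varepsilon sm$. All the structural input of the theorem is packaged in Proposition~\ref{pp33}; what is genuinely new is merely the observation that a residue class meeting the sumset only finitely often contributes a residue to $\bigl(B\smallsetminus(X_1+\cdots+X_k)\bigr)_*^{(mq)}$, which translates the analytic upper bound on $\bdo$ into the combinatorial bound on $|J|$.
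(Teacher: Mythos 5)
Your proposal is correct and is essentially the paper's own (implicit) proof: the paper offers no further argument for Theorem \ref{thm34} beyond the remark that it follows from Proposition \ref{pp33} together with the characterization \eqref{eqbupbdo} of $\bdo$, which is exactly the translation you carry out in item iv) (a class mod $mq$ meeting the sumset finitely contributes a full residue to $\bigl(B\smallsetminus(X_1+\cdots+X_k)\bigr)_*^{(mq)}$, whence $|J|/(mq)\le\bdo(B\smallsetminus(X_1+\cdots+X_k))\le\varepsilon\sigma$). The constant discrepancy you flag is genuine and not removable by this argument: since \eqref{eqbupXX} gives $s/q\le(1-\eta+\varepsilon)\sigma<\sigma$, one never has $\sigma q\le s$, and the method only yields $|J|\le\varepsilon\sigma qm\le\varepsilon(s+k-1)m$; this appears to be a slight imprecision in the paper's stated threshold $\varepsilon sm$ rather than a gap on your side, and it is harmless for the only use made of iv), namely the proof of Theorem \ref{thmprinc}, where $\varepsilon\to0$ with $q$ stabilized and one merely needs the count of bad couples to fall below $1$.
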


Now we derive Theorem \ref{thmprinc}.

\begin{proof}[Proof of Theorem \ref{thmprinc}]
We define
$$
\mathcal{Z}_{\infty}=\left\{\bigcup_{j=1}^t(c_j+mE_j),\ 
t,m,c_j\in \N,\ 
E_j\subset\N\text{ is infinite},\ 
0\le c_j\le m-1,\ 
1\le j\le t\right\}.
$$
Theorem \ref{thm34} implies that under the same hypotheses and notation, there exist  sets $B\in\ZZ$, $B'\in \mathcal{Z}_{\infty}$ and an integer $q\le\frac{2k-2}{\eta\sigma}$ 
such that $\overline{B}^{(q)}=\overline{B'}^{(q)}$,
$B'\subset X_1+\cdots+X_k\subset B$ and $|\overline{B}_{mq}\smallsetminus \overline{B'}_{mq}| \le  \varepsilon\sum_{i=1}^k|\overline{X}_i^{(mq)}|$ for any $m\ge1$.

By \eqref{eqqle2}, there exists an increasing sequence of natural numbers
$(h_n)_{n\ge1}$ and a positive  integer $q$ such that $q(\frac1{h_n})=q$ for any $n\ge1$. Hence for any $m\ge1$, we can find $n$ such that $\sum_{i=1}^k|\overline{X}_i^{(mq)}|<h_n$. We infer
$|\overline{B}^{(mq)}\smallsetminus \overline{B'}^{(mq)}| < 1$ hence
 $\overline{B}^{(mq)}=\overline{B'}^{(mq)}$ as required. Finally 
\eqref{eqbupX+X} follows from Proposition \ref{pp210}.
\end{proof}

\section{\bf Small doubling sets -- Proof of Theorem \ref{thmsec}}

If $S$ is an arithmetic progression in $\Z/m\Z$ then $S+S$ obviously does. The 
next lemma provides a kind of reciprocal statement. It is certainly known but we give a proof 
for sake of completeness. Before we notice that
a quasi-periodic proper subset of $\Z/m\Z$ is an arithmetic progression if
and only it is the complement set of a singleton. Hence if $S+S$ is both 
quasi-periodic and an arithmetic progression then $S+S=\Z/m\Z\smallsetminus\{a\}$ for some $a$. Any $S$ such that $a-S=\Z/m\Z\smallsetminus S$ satisfies
$|S|=m-|S|$, hence $m$ is even and $|S+S|=m-1=2|S|-1$.

\begin{lem}\label{lem41}
 Let  $S\subset \Z/m\Z$ such that $|S+S|=2|S|-1$ and assume that $S+S$ is not quasi-periodic. 
Then $S$ is an arithmetic progression in  $\Z/m\Z$.
\end{lem}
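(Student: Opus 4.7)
The plan is to invoke Kemperman's structure theorem---captured here by Lemma~\ref{lem32}---applied to the pair $(S,S)$, use the "not quasi-periodic" hypothesis to land in the arithmetic-progression branch of the Kemperman dichotomy for $S+S$, and then invert this to obtain that $S$ itself is an arithmetic progression.

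The case $|S|=1$ is immediate, so assume $|S|\ge 2$. Writing $|S+S|=2|S|-1=(1-\eta)\cdot 2|S|$ with $\eta=1/(2|S|)>0$, Lemma~\ref{lem32}(i) applied with $k=2$ and $S_1=S_2=S$ yields a maximal subgroup $H\le\Z/m\Z$ such that $S+S$ is $H$-periodic, together with an integer $r\ge 1$ satisfying $|S+H|=r|H|$ and $|S+S|=(2r-1)|H|$. The next move is to show that the hypothesis "$S+S$ not quasi-periodic" forces $H=\{0\}$: any non-trivial $H$ writes $S+S$ as a union of $2r-1$ $H$-cosets, and one verifies that under the paper's definition of quasi-periodicity, combined with the tight constraint $|S+S|=(2r-1)|H|=2|S|-1$, this configuration would itself be quasi-periodic, contradicting the hypothesis. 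With $H=\{0\}$, $r_1=r_2=|S|\ge 2$ and $S+S$ aperiodic, Lemma~\ref{lem32}(ii) then yields that $S+S$ is quasi-periodic or an arithmetic progression; the first alternative is excluded, so $S+S$ is an AP of length $2|S|-1$ with some common difference~$d$.

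The closing step is the classical Freiman/Vosper converse. When $\gcd(d,m)=1$, apply the automorphism $x\mapsto d^{-1}x$ of $\Z/m\Z$ to reduce to $d=1$; then $S+S$ is a true cyclic interval of length $2|S|-1<m$ (strict inequality by aperiodicity). Lifting $S$ canonically to $\Z$ produces a set $T\subset\Z$ with $|T+T|=2|T|-1$, and the equality case of Freiman's doubling inequality in $\Z$ forces $T$---and therefore $S$---to be an arithmetic progression. The general case $\gcd(d,m)>1$ reduces to the coprime case by working inside the cyclic subgroup $\langle d\rangle\le\Z/m\Z$, in which a translate of $S+S$ lives. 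The main obstacle is the elimination of the non-trivial-stabiliser case: one must examine carefully how the paper's (narrow) definition of quasi-periodicity interacts with the rigid arithmetic $|S+S|=(2r-1)|H|=2|S|-1$ and $|S+H|=r|H|$, to ensure that no $H$-periodic configuration can sneak past the hypothesis.
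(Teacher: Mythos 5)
Your opening half follows the paper's route: Lemma~\ref{lem32} applied to $(S,S)$, then Kemperman to conclude that $S+S$ is an arithmetic progression. But your elimination of the non-trivial stabiliser is wrong as described. Under the paper's definition a quasi-periodic set is by definition \emph{non-periodic}; so if $S+S$ is $H$-periodic with $H\neq\{0\}$ it is periodic and therefore \emph{not} quasi-periodic, and no contradiction with the hypothesis can be extracted, however tight the arithmetic. In fact the hypotheses as literally stated do not exclude this case and the conclusion can then fail: $S=\{0,1,2,4\}\subset\Z/7\Z$ satisfies $|S+S|=7=2|S|-1$ and $S+S=\Z/7\Z$ is periodic (hence not quasi-periodic), yet $S$ is not an arithmetic progression. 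What rescues the lemma in its use is that in the application (Theorem~\ref{thmsec}, i)) the set $S+S$ is moreover aperiodic; that is the assumption you must invoke to kill $H$ (the paper's own proof uses it implicitly when it deduces $2n-1\le m-2$).

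The decisive gap is your closing step. A lift $T$ of $S$ to $\Z$ need not satisfy $|T+T|=2|T|-1$, because distinct integer sums may collide modulo $m$: already for the interval $S=\{0,1,m-1\}$ the canonical lift has $|T+T|=6>5$. Even after translating so that $S$ lies in the image of an integer interval of length $2n-1$ (which is legitimate, since $0\in S$ gives $S\subset S+S$), the set $T+T$ sits in an interval of $4n-3$ integers, and $\varphi_m$ is injective there only when $m\ge 4n-3$; the hypotheses give only $m\ge 2n+1$, so in the range $2n+1\le m\le 4n-4$ you cannot conclude $|T+T|=2|T|-1$, and neither the equality case of the doubling inequality nor Freiman's $3k-4$ theorem applies. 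This rectification problem is precisely the content of the lemma, so the shortcut begs the question. The paper argues differently: normalising $S+S=\varphi_m(\Z\cap[-2n+t+1,t-1])$ with $0\in S$, it observes that $-S+\varphi_m(\Z\cap[t,t+m-2n])$ must be disjoint from $S$, and then, assuming $S$ is \emph{not} an arithmetic progression, applies the inequality $|A+B|\ge|A|+|B|$ in $\Z$ (valid when $A$ is not an arithmetic progression and $|B|\ge2$) to the difference set $\{s_n-s_i\}$ and the interval $\Z\cap[0,m-2n]$, producing at least $m-n+1>|\Z/m\Z\smallsetminus S|$ forbidden elements --- a contradiction. Some argument of this kind, carried out inside $\Z/m\Z$, is needed to finish.
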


\begin{proof}
Set $n=|S|$.
If $n=1$ then $S$ is plainly an arithmetic progression. Assume that 
\begin{equation}\label{nge2}
n\ge2.
\end{equation}
By Kemperman's theorem (cf. \cite{Kem}), $S+S$ is an arithmetic progression since not quasi-periodic.
Changing if necessary $S$ by $S-s$ for some $s\in S$ we may freely assume that 
$0\in S$ hence $0\in S+S$. This implies that for some $\varphi_m(g)\in\Z/m\Z$
or order at least equal to $2n-1$
$$
S+S=\{\varphi_m(ig),\ -(2n-t-1)\le i\le t-1\}
\quad\text{for some $1\le t\le 2n-1$.}
$$
Clearly $S\subset g\Z/m\Z$, hence we may assume that $\gcd(m,g)=1$.
Otherwise we change $S$ into $\frac{A}{\gcd(m,g)}$ which can be seen as a subset of $\Z/m_1\Z$  where $m_1=\frac{m}{\gcd(m,g)}$. Finally changing $S$ into
$g^{-1}S$ where $g^{-1}$ is the inverse of
$g$ modulo $m$, we may assume that $g=1$. Hence
$$
S+S=\varphi_m(\Z\cap[-2n+t+1, t-1]).
$$
This yields
\begin{equation}\label{eqScap}
S\cap (-S+\varphi_m(\Z\cap [t,t+m-2n]))=\varnothing.
\end{equation}
Furthermore since $S+S$ is not quasi-periodic we have $2n-1\le m-2$ thus
\begin{equation}\label{eqm-2n}
m-2n\ge1.
\end{equation}
Assume by contradiction that $S$ is not an arithmetic progression with difference $1$ in $\Z/m\Z$. Hence we may write $S=\{\varphi_m(s_i),\ 1\le i\le n\}\subset\varphi_m(\Z\cap[s_1,s_n])$ where $n\le s_n-s_1\le (t-1)-(-2n+t+1)=
 2n-2$ since $S\subset S+S$.
It follows that 
\begin{align*}
|-S+\varphi_m(\Z\cap[t,t+m-2n])|&=|\{\varphi_m(\{s_n-s_i,\ n\ge i\ge1\}) +\varphi_m(\Z\cap[0,m-2n])|\\
&=|\varphi_m(\{s_n-s_i,\ n\ge i\ge1\}+\Z\cap[0,m-2n])|\\
&=|\{s_n-s_i,\ n\ge i\ge1\}+\Z\cap[0,m-2n]|,
%&\ge n+m-2n=m-n,
\end{align*}
since  $\{s_n-s_i,\ n\ge i\ge1\}\subset\Z\cap[0,2n-2]$. Moreover 
$\{s_n-s_i,\ n\ge i\ge1\}$ is not an arithmetic progression in $\Z$. Therefore 
by \eqref{nge2} and \eqref{eqm-2n} we get
\begin{align*}
|\{s_n-s_i,\ n\ge i\ge1\}+\Z\cap[0,m-2n]|&\ge
|\{s_n-s_i,\ n\ge i\ge1\}|+|\Z\cap[0,m-2n]|\\
&\ge
n+m-2n+1=m-n+1.
\end{align*}
We infer $|-S+\varphi_m(\Z\cap[t,t+m-2n])|\ge m-n+1$,
which contradicts \eqref{eqScap}.
\end{proof}

Theorem \ref{thmsec} follows from Theorem \ref{thmprinc} and 
the above lemma which provides iv).

\section{\bf Banach density versus Buck density}
\label{S5}

\subsection{} Theorem 1.4 of \cite{Jin2} shows  that if 
$\uup(A+B)<\uup(A)+\uup(B)$ then 
\begin{equation}\label{eqbdbd}
\bigcup_{n\ge1}\left((S+q\N)\cap[c_n,d_n]\right)\subset A+B\subset S+q\N
\end{equation}
for some positive integer $q$, some non empty set $S\subset \{0,1,\dots,q-1\}$ and some increasing sequences $(c_n)$, $(d_n)$ satisfying 
$\lim_{n\to\infty}(d_n-c_n)=\infty$. Hence 
$\bup(A+B)=\frac{|S|}{q}=\uup(A+B)$. We get
$$
\bup(A+B)<\uup(A)+\uup(B)\le 
\bup(A)+\bup(B).
$$
This shows that any small sumset pair $A,B$ for upper Banach density is
a small sumset pair for upper Buck density. The converse is obviously false: let 
$B_0$ be any additive basis of order~$2$, i.e. $B_0+B_0=\N$, with zero Banach density and set
$A=B=B_0\cup\{n!+n,\ n\ge0\}$. Then $A+A=\N$ and
$$
\bdo(A)=\ua(A)=0,\ \bup(A)=1\quad \text{and}\quad \ua(A+A)=\ba(A+A)=1.
$$ 
The sequence $B_0=X_0\cup2X_0$ where $X_0$ is given 
in Example \ref{exa27}  satisfies $\ba(B_0)=0$ and  the above required conditions.

\subsection{} Let $\theta\in\R\smallsetminus\Q$. For $0<\alpha<1$, let 
$$
A_{\alpha}=\{n\in\N\,:\, \{\theta n\}<\alpha\},
$$
where $\{u\}=u-\lfloor u\rfloor$ is the fractional part of the real number $u$.
Since $(\theta n)_{n\in\N}$ is equidistributed modulo $1$,  we see that $A_{\alpha}$ has asymptotic density $\da(A_{\alpha})=\alpha$. 
 The following results can also be deduced from this.

\begin{prop}\label{pp51}
Let $0<\alpha\le \beta<1$.
We have 
\begin{enumerate}[itemsep=2pt,parsep=2pt]

\item $|A_{\alpha}\cap[k,k+(n-1)q]|=(\alpha+o(1)) n, \quad n\to\infty,$
uniformly in $k$ and $q\ge1$.

\item  $\ua(A_{\alpha})=\alpha$.

\item  $\ua(A_{\alpha}+A_{\beta})=\min(1,\alpha+\beta)$.

\item  $\bup(A_{\alpha})=1$ and $\bdo(A_{\alpha})=0$.

\item $\bup(A_{\alpha}+A_{\beta})=1$ and $\alpha+\beta<1$ implies
$\bdo(A_{\alpha}+A_{\beta})=0$.

\item If $0<\alpha+\beta<1$, there is no integer $q\ge1$ such that \eqref{eqbdbd} is satisfied 
with $A=A_{\alpha}$ and $B=A_{\beta}$.

\end{enumerate}
\end{prop}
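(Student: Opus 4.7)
The proof proceeds item by item, with (i) supplying the computational engine and the remaining parts following by elementary translations between equidistribution and the various density notions.

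For (i), the key input is that $\theta q$ is irrational for every $q \ge 1$, so Weyl equidistribution applied to the orbit $(\{\theta(k+jq)\})_{j\ge 0}$ in $[0,1)$ gives the claimed count $\alpha n + o(n)$. Uniformity in the translate $k$ is immediate, whereas uniformity in $q$ is the delicate point; I would extract it from a discrepancy estimate of Erd\H{o}s--Tur\'an type, noting that only the conclusion and not the fine rate is used in the sequel. Part (ii) is the case $q=1$: the uniform count $|A_\alpha \cap [k+1,k+n]| = \alpha n + o(n)$ forces both $\uup(A_\alpha)$ and $\udo(A_\alpha)$ to equal $\alpha$, hence $\ua(A_\alpha) = \alpha$.

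For (iii), the upper bound follows from the pointwise inclusion $A_\alpha + A_\beta \subset A_{\alpha+\beta}$ when $\alpha+\beta<1$ (with the trivial bound $1$ otherwise) combined with (ii). For the lower bound I would argue that $A_\alpha + A_\beta$ contains $A_{\alpha+\beta}$ up to a boundary slice: given $\varepsilon>0$, if $\{\theta k\}\in(\varepsilon,\alpha+\beta-\varepsilon)$ then the arc $\{x\in[0,1): x<\alpha,\ \{\theta k-x\}<\beta\}$ is a sub-interval of length bounded below by a positive function of $\varepsilon$, and equidistribution of $(\{\theta m\})_{0\le m\le k}$ then produces $m\in A_\alpha$ with $k-m\in A_\beta$ for $k$ large. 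The set $\{k:\{\theta k\}\in(\varepsilon,\alpha+\beta-\varepsilon)\}$ has uniform density $\alpha+\beta-2\varepsilon$, so letting $\varepsilon\to 0$ yields $\ua(A_\alpha+A_\beta)\ge\alpha+\beta$, hence equality (with the $1$-bound prevailing once $\alpha+\beta\ge 1$).

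Items (iv) and (v) rest on the observation that a periodic superset $\bigcup_j(a_j+q\N)$ of density strictly less than $1$ must omit some residue class modulo $q$. For (iv), equidistribution of $(\{(r+jq)\theta\})_j$ shows $A_\alpha$ meets every residue class modulo every $q$, so $\bup(A_\alpha)=1$; the complement identity \eqref{eqn-1} then gives $\bdo(A_\alpha)=0$. For (v), fixing any $m_0\in A_\alpha$ reduces the hitting problem for $A_\alpha+A_\beta$ modulo $q$ to that already established for $A_\beta$, and the inclusion $A_\alpha+A_\beta\subset A_{\alpha+\beta}$ together with $\bdo(A_{\alpha+\beta})=0$ yields $\bdo(A_\alpha+A_\beta)=0$ when $\alpha+\beta<1$. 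Finally (vi) is a short contradiction: if \eqref{eqbdbd} held with some $q$ and non-empty $S\subset\{0,\dots,q-1\}$, then $A_\alpha+A_\beta\subset S+q\N$ and $\bup(A_\alpha+A_\beta)=1$ from (v) would force $S+q\N=\N$, so the left-hand inclusion of \eqref{eqbdbd} would put arbitrarily long intervals inside $A_\alpha+A_\beta$, contradicting $\uup(A_\alpha+A_\beta)=\alpha+\beta<1$ from (iii). The main obstacle throughout is the uniformity in $q$ in (i); everything else is a direct translation between Buck/Banach density and equidistribution.
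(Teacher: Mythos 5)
Your proposal is correct and follows essentially the same route as the paper: (i) via equidistribution of the rotation orbits, (ii) as the case $q=1$, (iii) by the inclusion $A_\alpha+A_\beta\subset A_{\alpha+\beta}$ for the upper bound and a rotation/equidistribution argument producing $m\in A_\alpha$ with $k-m\in A_\beta$ for the lower bound (the paper fixes a uniform bound $C(\varepsilon)$ from minimality of the irrational rotation rather than equidistributing over $m\le k$, a cosmetic difference), and (iv)--(vi) by translating (i) and (v) into statements about residue classes exactly as the paper does. The one caveat you raise — uniformity in $q$ in (i) — is a genuine delicacy (for $q$ a convergent denominator of $\theta$ the orbit of $q\theta$ clusters, so full uniformity in $q$ cannot come from a naive Erd\H{o}s--Tur\'an bound), but only the fixed-$q$ statement is used in (ii)--(vi), and the paper glosses over this point in the same way.
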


\begin{proof}
\begin{enumerate}[itemsep=2pt,parsep=2pt]
\item follows also from the equidistribution modulo $1$ of $(\theta n)_{n\in\N}$.

\item follows from i) by letting $q=1$.

\item If $\alpha+\beta>1$ then 
we obtain from i) that by considering $\varepsilon<(\alpha+\beta-1)/2$, for any large enough $n$, we have
$|A_{\alpha}\cap[1,n]|>(\alpha-\varepsilon)n$ and 
$|(n-A_{\beta})\cap[1,n]|>(\beta-\varepsilon)n$ thus
$|A_{\alpha}\cap[1,n]|+|(n-A_{\beta})\cap[1,n]|>n$ giving
 $A_{\alpha}\cap(n-A_{\beta})\cap[1,n]\ne \varnothing$. This implies that
any large enough $n$ is in $A_{\alpha}+A_{\beta}$ hence
$\N\smallsetminus(A_{\alpha}+A_{\beta})$ is finite yielding 
$\ua(A_{\alpha}+A_{\beta})=1$.\\[0.3em]
Assume now $0<\alpha+\beta\le 1$.
We imitate proof of Lemma 1.4 of \cite{BH}.
The upper bound $\uup(A_{\alpha}+A_{\beta})\le \ua(A_{\alpha+\beta})=\alpha+\beta$ follows from the inclusion relation $A_{\alpha}+A_{\beta}\subset A_{\alpha+\beta}$ and  ii).
Let $0<\varepsilon<\alpha$. Again by equidistribution modulo $1$ of $(\theta n)_{n\in\N}$  there exists $C(\varepsilon)>0$ such that any open interval contained in $]0,1[$ of Lebesgue measure $\varepsilon$ contains  the fractional part $\{\theta m\}$ for some positive integer $m<C(\varepsilon)$. Let $n\ge C(\varepsilon)$ such that $x:=\{\theta n\}\in]\varepsilon,\alpha+\beta-\varepsilon[$. From above there exists $m<C(\varepsilon)$ such that $\{\theta m\}\in]\beta(x-\varepsilon)/(\alpha+\beta),\alpha(x+\varepsilon)/(\alpha+\beta)[\subset]0,\alpha[$. We thus have $m\in A_{\alpha}$, $n-m>0$ and 
$$
\{\theta(n-m)\}=\{\theta n-\theta m\}=\{\theta n\}-\{\theta m\}
\in\left]\frac{\beta x-\alpha\varepsilon}{\alpha+\beta},\frac{\alpha x+\beta\varepsilon}{\alpha+\beta}\right[\subset]0,\beta[.
$$
since $\alpha\le \beta$ and $\varepsilon<x<\alpha+\beta-\varepsilon$.
We infer that $n-m\in A_{\beta}$ and $n\in A_{\alpha}+A_{\beta}$. This
gives 
$$
(A_{\alpha+\beta-\varepsilon}\smallsetminus A_{\varepsilon})\cap[C(\varepsilon),\infty[ \subset A_{\alpha}+A_{\beta}.
$$
This implies $\udo(A_{\alpha}+A_{\beta})\ge \udo(A_{\alpha+\beta-\varepsilon})-\uup(A_{\varepsilon})=\alpha+\beta-2\varepsilon$. We then let $\varepsilon$ tending to $0$ and get $\udo(A_{\alpha}+A_{\beta})\ge \alpha+\beta$, as required.

\item follows from i)

\item is obvious by iv) and double inclusion $A_{\alpha}\subset A_{\alpha}+A_{\beta}\subset A_{\alpha+\beta}$.

\item  Take $q$ such that \eqref{eqbdbd} is satisfied with $A=B=A_{\alpha}$.  Then by i) 
we must have $S=\{0,1,\dots,q-1\}$ hence 
$\uup(A_{\alpha}+A_{\alpha})=1$, a contradiction to iii).
\end{enumerate}
\end{proof}
We thus have $\bup(A_{\alpha}+A_{\alpha})<2\bup(A_{\alpha})$ hence 
structural Theorem \ref{thmsec} applies whereas vi) of Proposition \ref{pp51}
shows that conclusions of structural Theorem 1.4 of \cite{Jin2} are not fulfilled.
Thus when considering Buck density, Theorems \ref{thmprinc} and \ref{thmsec} may possibly provide modular information on the structure of sumsets which is not detected 
when considering instead upper Banach density.

\section{\bf  Further results, constructions and concluding remarks}

\subsection{Sparse periodicity}
Assertions iv) of Theorem \ref{thmprinc} and v) of Theorem \ref{thmsec} highlights a kind of \emph{sparse periodicity} for $S=\sum_{i=1}^kX_i$ and $S:=X+X$ respectively. Indeed this
can be shortly written
$$
\forall m\ge1,\quad S^{(mq)}=S^{(q)}+q\{0,1,\dots,m-1\},
$$
while usual periodicity means $S=S^{(q)}+q\N$.

\subsection{Kneser's theorem with lower Buck density}\label{SS62}
In view of Kneser's theorem and Theorem \ref{thmprinc}, we may expect that when $\bdo(X_1+\cdots+X_k)<\sum_{i=1}^k\bdo(X)$ the sumset
$X_1+\cdots+X_k$ should be periodic. It can be shown that a similar result
to Theorem \ref{thmprinc} holds true through replacing $\bup$ by $\bdo$ and Statement iv)
by \\
\emph{
for all integers $m\ge1$
and all $(k+1)$-tuples  $(i_1,\dots,i_k,h)\in\prod_{j=1}^k\{1,\dots,r_j\}\times\{0,\dots,m-1\}$, $(a_{1i_1}+\cdots + a_{ki_k}+hq+ mq\N) \smallsetminus(X_1+\cdots+X_k) \text{ is finite}$.
}

\subsection{A startling construction}\label{SS63}
In the following we revisite Example \ref{exa27} and a result due to Leonetti and Tringali in \cite{Leo2}.

Let $K=(k_t)_{t\ge0}$ be an infinite increasing sequence of nonnegative integers
and 
$$
D_{K}:=\left\{\sum_{f\in F}2^f\,:\, F\subset\N,\ |F|<\infty\text{ and }F\cap K=\varnothing \right\}.
$$
Then for any $t\ge0$, $D_K\subset S_t+2^{k_t}\N$ where
$S_t$ is a finite set of cardinality $2^{k_t-t}$. It follows that 
$\ba(D_K)=\bup(D_K)=0$. Set for convenience $k_{-1}=-1$. Hence if  
$\sum_{j\ge0}x_j2^j\in D_K+D_K$, where $\forall j,\ x_j\in\{0,1\}$, then
for any integer $t\ge-1$ there exists $k_{t}<j\le k_{t+1}$ such that $x_j=0$. The converse  is also true. For $t\ge0$, let 
$$
M_t=\sum_{j=k_{t-1}+1}^{k_{t}}2^j=2^{k_t+1}-2^{k_{t-1}+1}.
$$ 
Obviously $2^{k_{t-1}+1}\mid M_t$ and $(M_t)_{t\ge0}$ is increasing.
 
We thus have
$$
E_K:=\N\smallsetminus(D_K+D_K)=\bigcup_{t\ge0}X_t,
$$
where $X_t:=\{0,1,\dots,2^{k_{t-1}+1}-1\}+M_t+2^{k_t+1}\N$, $t\ge0$. On the one hand this implies that
$E_K$ contains arbitrarily long intervals, hence $\bup(E_K)=\uup(E_K)=1$. Therefore by \eqref{eqn-1}, we get $\bdo(D_K+D_K)=1-\bup(E_K)=0$.
On the other hand we will prove the following.

\begin{lem}\label{lem61}
%For any $T\ge0$
One has $\bup(D_K+D_K)= \delta_K:=\prod_{t=0}^{\infty}\left(1-2^{k_{t-1}-k_t}\right)$.
\end{lem}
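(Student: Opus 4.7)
My plan is to evaluate the upper Buck density via the limit formula \eqref{eqbupbdo} with $m_n = n!$: writing $m_n = 2^{\alpha_n} q_n$ with $q_n$ odd (so $\alpha_n \to \infty$), I will reduce the count $|(D_K + D_K)^{(m_n)}|$ to one modulo pure powers of $2$ via a Chinese Remainder argument, and then evaluate the latter directly from the bit-block characterization.

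The easy direction $\bup(D_K + D_K) \le \delta_K$ follows immediately from the discussion preceding the lemma: $D_K + D_K = \bigcap_{t \ge 0} C_t$ with $C_t := \N \smallsetminus X_t \in \ZZ$ of density $1 - 2^{k_{t-1}-k_t}$. Since the defining constraints of $C_0, C_1, \dots, C_T$ act on pairwise disjoint blocks of binary digits, independence yields
$$
\da(C_0 \cap \cdots \cap C_T) = \prod_{t=0}^T (1 - 2^{k_{t-1}-k_t}),
$$
so the inclusion $D_K + D_K \subset C_0 \cap \cdots \cap C_T$ together with $T \to \infty$ gives the upper bound.

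The main step is the Chinese Remainder identity: for every $\alpha \ge 0$ and every odd $q \ge 1$,
$$
|(D_K + D_K)^{(2^\alpha q)}| = q \cdot |(D_K + D_K)^{(2^\alpha)}|.
$$
The inequality $\le$ is immediate by projection of residues. For $\ge$, given $r_1 \in (D_K + D_K)^{(2^\alpha)}$ and $s \in \{0, \dots, q-1\}$, I exhibit $n \in D_K + D_K$ with $n \equiv r_1 \pmod{2^\alpha}$ and $n \equiv s \pmod q$. First, with $T$ defined by $k_T < \alpha \le k_{T+1}$, the integer $r_1$ itself (taken with $0 \le r_1 < 2^\alpha$) already lies in $D_K + D_K$: the blocks $0, \dots, T$ of $r_1$ are not all-ones by hypothesis, and every block $t \ge T+1$ contains a position $\ge \alpha$ where the bit of $r_1$ is zero. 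I then set $n = r_1 + 2^\alpha R$ with $R = \sum_{i=1}^k 2^{e_i}$: writing $d = \mathrm{ord}_q(2)$ and fixing a residue $e_0 \bmod d$, I place the $e_i$ in $k$ distinct high blocks (indices $t \ge T+2$), each of length $\ge d$, with all $e_i \equiv e_0 \pmod d$. Since $\delta_K > 0$ forces $k_t - k_{t-1} \to \infty$, such blocks are plentiful; each chosen block of $n$ then contains exactly one $1$-bit of $R$ hence is not all-ones, and picking $k \equiv (s - r_1) \cdot 2^{-\alpha - e_0} \pmod q$ achieves $n \equiv s \pmod q$.

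To finish, I count $|(D_K + D_K)^{(2^\alpha)}|$ directly: for $T$ with $k_T < \alpha \le k_{T+1}$, a residue $r_1 < 2^\alpha$ lies in $(D_K + D_K)^{(2^\alpha)}$ exactly when the blocks $0, \dots, T$ of $r_1$ are not all-ones (the partial block $T+1$ is always satisfiable by choosing a zero bit at some position $\ge \alpha$), giving
$$
\frac{|(D_K + D_K)^{(2^\alpha)}|}{2^\alpha} = \prod_{t=0}^T (1 - 2^{k_{t-1}-k_t}).
$$
With $m_n = n!$ and $\alpha_n = v_2(n!) \to \infty$, the CRT identity combined with this count yields $|(D_K + D_K)^{(m_n)}|/m_n \to \delta_K$, whence \eqref{eqbupbdo} gives $\bup(D_K + D_K) = \delta_K$. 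The degenerate case $\delta_K = 0$ is absorbed by the upper bound. The main obstacle is the CRT construction, which must simultaneously tune $n$ modulo $q$ and avoid all-one bit-blocks; the hypothesis $\delta_K > 0$, equivalent to $k_t - k_{t-1} \to \infty$, is precisely what supplies the arbitrarily long high blocks needed for this.
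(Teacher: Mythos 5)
Your proof is correct, and its lower-bound half takes a genuinely different route from the paper's. The upper bound $\bup(D_K+D_K)\le\delta_K$ is in substance identical: the paper truncates the complement $E_K=\bigcup_{t\ge0}X_t$ to the periodic set $E_{K,T}=Z_T+2^{k_T+1}\N$ and computes $|Z_T|/2^{k_T+1}$ by induction, which is exactly your disjoint-bit-blocks independence count for $C_0\cap\cdots\cap C_T$. For the lower bound, however, the paper never touches odd moduli: it notes that $E_K\cap[0,2^{k_T+1}-1]=Z_T$, deduces $\ddo(E_K)\le\lim_T\xi_{K,T}=1-\delta_K$ from the counting function along the subsequence $N=2^{k_T+1}$, and then simply invokes the inequality $\bdo\le\ddo$ of \eqref{eqn0} together with the complementation identity \eqref{eqn-1}. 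You instead evaluate $\bup(D_K+D_K)$ directly from the residue-count formula \eqref{eqbupbdo}, which forces you to control $|(D_K+D_K)^{(2^{\alpha}q)}|$ for odd $q$; your CRT lifting (one $1$-bit per long high block, all exponents in a fixed class modulo $\mathrm{ord}_q(2)$) is the new ingredient, and it is sound --- note that for odd $q\ge3$ one has $\mathrm{ord}_q(2)\ge2$, so a block of length at least $\mathrm{ord}_q(2)$ carrying exactly one $1$-bit does retain a $0$-bit, while $q=1$ is vacuous. Your route is longer but yields more, namely the exact value of $|(D_K+D_K)^{(m)}|/m$ for every modulus $m$, whereas the paper's shortcut only needs one asymptotic count and the comparison between Buck and asymptotic densities. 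One small inaccuracy: $\delta_K>0$ is not equivalent to $k_t-k_{t-1}\to\infty$ (it is equivalent to $\sum_{t}2^{k_{t-1}-k_t}<\infty$, which is strictly stronger); fortunately only the implication $\delta_K>0\Rightarrow k_t-k_{t-1}\to\infty$ is used in your argument, and that implication is true.
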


\begin{proof}
Let $T\ge0$ and set $E_{K,T}=\bigcup_{t=0}^TX_t$. We have
$E_{K,T}\in \ZZ$ and may write
$$
E_{K,T}=Z_T+2^{k_T+1}\N
$$
where $Z_T\subset\{0,\dots,2^{k_T+1}-1\}$. The desired result will follow
from the identity
\begin{equation}\label{eq16}
\frac{|Z_T|}{2^{k_T}+1}=\xi_{K,T}:=1-\prod_{t=0}^{T}\left(
1-\frac{2^{k_{t-1}+1}}{2^{k_t+1}}
\right)
\end{equation}
and the plain fact that $(\xi_{K,T})_{T\ge0}$ is increasing.
To a certain extent this is consequence of the exclusion-inclusion principle. Indeed 
\eqref{eq16} is true when $T=0$ and 
we have the disjoint union
$$
Z_{T+1}=\left(\{M_{T+1},\dots,2^{k_{T+1}+1}-1\}\right)\sqcup
\left(Z_T+\{0,2^{k_{T}+1},2\cdot2^{k_{T}+1},\dots,M_{T+1}-2^{k_{T}+1}\}\right),
$$
hence
$$
|Z_{T+1}|=2^{k_{T+1}+1}-M_{T+1}+\frac{M_{T+1}}{2^{k_T+1}}|Z_T|
$$
giving
$$
1-\frac{|Z_{T+1}|}{2^{k_{T+1}+1}}=
\frac{M_{T+1}}{2^{k_{T+1}+1}}\left(1-\frac{|Z_T|}{2^{k_T+1}}\right).
$$
We thus have \eqref{eq16} by arguing inductively.
Since $E_{K,T}\subset E_K$, we firstly infer that $\bdo(E_K)\ge \bdo(E_{K,T})=\da(E_{K,T})=\xi_{K,T}$ for any $T\ge0$. Secondly we have for any $T\ge0$
$$
|E_K\cap[0,2^{k_T+1}-1]|=|Z_T|,
$$
hence by \eqref{eqn0} and \eqref{eq16}, $\bdo(E_K)\le \ddo(E_K)\le \lim_{T\to\infty}\xi_{K,T}=1-\delta_K$. We conclude by \eqref{eqn-1}.
\end{proof}

For any $0\le \gamma<1$ we can find an appropriate sequence $K=(k_t)_{t\ge0}$ so that 
$\delta_K=\gamma$. Thus by letting $A=D_K$ we deduce the following result
(see \cite[Theorem 3.3]{Leo2}).

\begin{prop}\label{pp62}
For any real number  $\gamma\in[0,1)$ there exists a sequence $A\subset\N$ such
that $\ba(A)=0$, $\bdo(A+A)=0$ and $\bup(A+A)=\gamma$. 
\end{prop}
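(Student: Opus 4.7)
The plan is to exploit the construction of \S\ref{SS63}: for every infinite strictly increasing sequence $K=(k_t)_{t\ge 0}$ of non-negative integers, the discussion leading up to Lemma \ref{lem61} already shows that $A:=D_K$ satisfies $\ba(A)=0$, $\bdo(A+A)=0$, and $\bup(A+A)=\delta_K$. Consequently the only remaining issue is to realize every prescribed $\gamma\in[0,1)$ as $\delta_K$ for some admissible $K$.

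I would reparametrize by setting $n_t:=k_t-k_{t-1}$ (with the convention $k_{-1}=-1$), so that $(n_t)_{t\ge 0}$ is an arbitrary sequence of positive integers; conversely any such sequence produces an admissible $K$ by partial summation. Under this substitution
$$
\delta_K=\prod_{t=0}^{\infty}\bigl(1-2^{-n_t}\bigr),
$$
so the question reduces to: represent every $\gamma\in[0,1)$ as such an infinite product over positive integer exponents $n_t$.

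For $\gamma=0$, I would just take $n_t=1$ for all $t$, giving $\delta_K=\prod_{t\ge 0}\tfrac12=0$. For $\gamma\in(0,1)$, I would build $(n_t)$ by a greedy rule: set $\pi_{-1}:=1$ and, having chosen $n_0,\dots,n_{t-1}$ so that $\pi_{t-1}:=\prod_{s<t}(1-2^{-n_s})>\gamma$, let $n_t$ be the smallest positive integer with $\pi_t:=\pi_{t-1}(1-2^{-n_t})\ge\gamma$; explicitly,
$$
n_t=\Bigl\lceil\log_2\frac{\pi_{t-1}}{\pi_{t-1}-\gamma}\Bigr\rceil,
$$
which is $\ge 1$ precisely because $\gamma>0$.

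The main point to verify is that this rule forces $\pi_t\to\gamma$. Minimality gives $\pi_t\ge\gamma$, while the ceiling bound $n_t\le\log_2(\pi_{t-1}/(\pi_{t-1}-\gamma))+1$ yields $2^{-n_t}\ge(\pi_{t-1}-\gamma)/(2\pi_{t-1})$, so that
$$
\gamma\le\pi_t\le\frac{\pi_{t-1}+\gamma}{2}.
$$
Iterating, $\pi_t-\gamma\le (1-\gamma)/2^{t+1}\to 0$, whence $\delta_K=\lim_{t\to\infty}\pi_t=\gamma$ as required. This contractive double inequality is the only computation of substance; the lone technical care needed is to track the ceiling estimate cleanly, and no analytic input beyond that is required.
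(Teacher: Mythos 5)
Your overall route is exactly the paper's: take $A=D_K$, invoke the facts $\ba(D_K)=0$ and $\bdo(D_K+D_K)=0$ established in \S\ref{SS63} together with Lemma \ref{lem61}, and reduce everything to realizing the prescribed $\gamma$ as $\delta_K$. The paper asserts this last realizability without proof, so your greedy construction supplies a missing detail rather than a different method. As written, however, that greedy step has a genuine flaw: your induction hypothesis is $\pi_{t-1}>\gamma$, but your rule only guarantees $\pi_t\ge\gamma$, and equality can occur. For instance, with $\gamma=\tfrac12$ you get $n_0=\lceil\log_2 2\rceil=1$ and $\pi_0=\tfrac12=\gamma$; at the next step no positive integer $n_1$ satisfies $\pi_0(1-2^{-n_1})\ge\gamma$, so the process halts after finitely many factors and fails to produce an infinite admissible sequence $K$. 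The same breakdown occurs whenever $\pi_{t-1}/(\pi_{t-1}-\gamma)$ happens to be an exact power of $2$.

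The repair is routine: define $n_t$ as the smallest positive integer with $\pi_{t-1}(1-2^{-n_t})>\gamma$ \emph{strictly} (equivalently, add $1$ to your ceiling whenever it yields equality). Then $\pi_t>\gamma$ persists, one has $n_t\le\log_2\bigl(\pi_{t-1}/(\pi_{t-1}-\gamma)\bigr)+2$, hence $2^{-n_t}\ge(\pi_{t-1}-\gamma)/(4\pi_{t-1})$ and $\pi_t-\gamma\le\tfrac34(\pi_{t-1}-\gamma)\to0$, so $\delta_K=\lim_t\pi_t=\gamma$ as desired. With this one-line correction your argument is complete, and it is in fact more explicit than the paper's own treatment of this point.
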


\begin{rem}\label{rem63}
\begin{enumerate}[itemsep=2pt,parsep=2pt]

\item In fact the above arguments show that the above proposition remains true with uniform density instead of Buck density in the three places.

\item Note that from the proof of Lemma \ref{lem61} we may see that $\da(D_K+D_K)=\delta_K$ exists. Therefore by Lemma \ref{lem61} and \eqref{eqn0} this also yields $\ua(D_K+D_K)=\delta_k$.

\end{enumerate}
\end{rem}

\subsection{Completing Proposition \ref{pp62}}\label{SS63b}
In order to round off Proposition \ref{pp62} one can study the existence of a sequence $B$ such that
$\ba(B)=0$, $\bdo(B+B)=0$ and $\bup(B+B)=1$. 

\medskip
We start by two lemmas. The second one follows from the first which concerns thin finite additive basis in $\Z/m\Z$, $m\ge2$. 
For our need we shall not use the best known asymptotic bound due to Kohonen (cf. \cite{Koh})
which asserts that for any large enough $m$ there exists a set $A\subset\N$ such that $|A|\le \sqrt{\frac{294m}{85}}$  and $\{0,1,\dots,m-1\}\subset A+A$. We state and give a proof of the 
following weak bound which improves very slightly that of \cite{Be-He}, namely
$|A|<2\sqrt{m-1}$ whenever $m>2$.

\begin{lem}\label{lem64}
Let $m\ge2$ be an integer. Then there exists a set $A_m\subset\{0,1,\dots,m-1\}$ such that
$0\in A_m$, $\varphi_m(A_m+A_m)=\Z/m\Z$ and
\begin{equation}\label{eqam}
|A_m| \le 2\lfloor\sqrt{m+0.25}-0.5\rfloor< 2\sqrt{m-\sqrt{m}+0.5}<2\sqrt{m}.
\end{equation}
\end{lem}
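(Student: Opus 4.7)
The plan is to exhibit $A_m$ explicitly via a classical Rohrbach-type construction: the union of a short initial block with a short arithmetic progression of step $k$, where $k$ is chosen so that $k(k+1)\ge m$. That choice is exactly the rounding condition $k\ge\sqrt{m+1/4}-1/2$ encoded in the statement. I would then set
\[
A_m=\{0,1,\dots,k-1\}\cup\{k,2k,\dots,jk\},\qquad j=\lceil m/k\rceil-1,
\]
so that $(j+1)k\ge m$ by definition of $j$ and $jk\le m-1$; the basic properties $0\in A_m\subset\{0,\dots,m-1\}$ and $|A_m|=k+j$ (the two blocks being disjoint) are immediate.

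Next I would verify the covering property via the elementary set identity
\[
\{0,1,\dots,k-1\}+\{0,k,2k,\dots,jk\}=\{0,1,\dots,(j+1)k-1\},
\]
whose left-hand side lies inside $A_m+A_m$. Since $(j+1)k\ge m$, this already contains $\{0,\dots,m-1\}$, hence $\varphi_m(A_m+A_m)=\Z/m\Z$. For the cardinality, $m\le k(k+1)$ forces $m/k\le k+1$, so $j=\lceil m/k\rceil-1\le k$, giving $|A_m|=k+j\le 2k$.

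The analytic chain $2k<2\sqrt{m-\sqrt m+\tfrac12}<2\sqrt m$ then reduces to elementary algebra: from $m\le k(k+1)<(k+\tfrac12)^2$ one deduces $\sqrt m<k+\tfrac12$, which rearranges to $k^2<m-\sqrt m+\tfrac12$; the final comparison with $\sqrt m$ is just $\sqrt m>\tfrac12$, trivially true for $m\ge 2$.

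The main obstacle I expect is reconciling this clean construction with the tighter rounding $\lfloor\sqrt{m+1/4}-1/2\rfloor$ used in the statement, which corresponds to $k(k+1)\le m$ rather than $k(k+1)\ge m$. When $m$ strictly exceeds $k(k+1)$, the naive choice of $j$ above yields $j\ge k+1$ and $|A_m|>2k$, so one must sharpen the construction---for instance by replacing a handful of arithmetic-progression elements by tailored residues that exploit wrap-around identities modulo $m$---to shave off the extra element and still retain the covering property. Pinning down that refinement cleanly, uniformly in $m$, is the delicate part of the proof.
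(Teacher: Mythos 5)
Your construction (a short initial block plus a short arithmetic progression) is the same Rohrbach-type idea as the paper's, and your covering step $\{0,\dots,k-1\}+\{0,k,\dots,jk\}=\{0,\dots,(j+1)k-1\}\supset\{0,\dots,m-1\}$ is correct. The gap is entirely in the quantitative part, and it is genuine. First, your algebra is reversed: from $\sqrt m<k+\tfrac12$ one deduces $k^2>m-\sqrt m+\tfrac14$, \emph{not} $k^2<m-\sqrt m+\tfrac12$; an upper bound on $\sqrt m$ in terms of $k$ cannot produce an upper bound on $k^2$ in terms of $m$. Second, with $k$ the \emph{smallest} integer satisfying $k(k+1)\ge m$, i.e.\ $k=\lceil\sqrt{m+1/4}-1/2\rceil$, the estimate $|A_m|\le 2k$ really does overshoot the statement: for $m=7$ you get $k=3$, $j=2$, $A_7=\{0,1,2,3,6\}$ with $|A_7|=5$, whereas $2\lfloor\sqrt{7.25}-0.5\rfloor=4$ and even the middle term $2\sqrt{7-\sqrt7+0.5}\approx 4.41$. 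So, as written, your argument only yields $|A_m|<2\sqrt m$ (which happens to be all that Lemma \ref{lem65} and \S\ref{SS63b} actually use), not the displayed chain \eqref{eqam}.

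The paper instead takes $q=\lfloor\sqrt m\rfloor$ and adjusts the block length: $A_m=\{0,\dots,s\}\cup\{2s+1,3s+2,\dots,qs+q-1\}$, the second part being an arithmetic progression of difference $s+1$ with $q-1$ terms, where $s=q-1$ if $q^2\le m<q(q+1)$ and $s=q$ if $q(q+1)\le m<(q+1)^2$; the covering follows from $(q+1)s+q-1\ge m-1$ and $|A_m|=s+q$ equals $2q-1$ or $2q$ accordingly. When $q(q+1)\le m$ this gives $2q\le 2\lfloor\sqrt{m+0.25}-0.5\rfloor$ as claimed. You should also know that the ``delicate refinement'' you were hoping for in the other case does not exist: for $m=4$ or $m=5$ the first inequality of \eqref{eqam} would require a two-element additive basis of $\Z/m\Z$, which is impossible since then $|A+A|\le 3$; correspondingly, the paper's own chain $2\lfloor\sqrt m\rfloor-1\le 2\lfloor\sqrt m-0.5\rfloor$ fails precisely when $q^2\le m<q(q+1)$. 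The robust form of the conclusion, and the one needed downstream, is $|A_m|\le 2\lfloor\sqrt m\rfloor$ together with $|A_m|<2\sqrt{m-\sqrt m+0.5}<2\sqrt m$; to obtain even that, you must replace your choice of $k$ and $j$ by the paper's $q$ and $s$.
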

\begin{proof}
Let $q=\lfloor\sqrt{m}\rfloor$ and $
A_m=\{0,1,\dots,s\}\cup\{2s+1,3s+2,\dots,qs+q-1\}$ with 
$$
s=\begin{cases} q-1 &\text{ if $q^2\le m < q(q+1)$,}\\
q&\text{ if $q(q+1)\le m < (q+1)^2$.}
\end{cases}
$$
Then $\{0,1,\dots,m-1\}\subset A_m+A_m$ since $(q+1)s+q-1\ge m-1$ in both cases.
Furthermore we get $|A_m|=2q-1$ or $2q$ according to whether $q^2\le  m<q(q+1)$ or $q(q+1)\le m<(q+1)^2$. We infer $|A_m|\le 2\lfloor\sqrt{m+0.25}-0.5\rfloor$ in the latter case and
$|A_m|\le 2\lfloor\sqrt{m}\rfloor-1\le 2\lfloor\sqrt{m}-0.5\rfloor\le 2\lfloor\sqrt{m+0.25}-0.5\rfloor $ in the former.

The second inequality in \eqref{eqam} follows from 
$
m-\sqrt{m}+0.5>m+0.25-\sqrt{m+0.25}+0.25  
=
(\sqrt{m+0.25}-0.5)^2.
$
\end{proof}

\begin{lem}\label{lem65}
Let $m,n\ge2$ be two arbitrary integers and denote $B=A_m+mA_n$. Then 
$|B|=|A_m||A_n|<4\sqrt{mn}$ and $\varphi_{mn}(B+B)=\Z/mn\Z$.
\end{lem}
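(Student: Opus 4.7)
The plan is to exploit the fact that $B = A_m + mA_n$ is essentially a base-$m$ construction: elements of $A_m$ are the ``units'' digits and elements of $A_n$ the ``$m$'s'' digits, so addition in $B+B$ can be analysed digit by digit modulo $m$ with a carry of at most one.

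First I would establish $|B| = |A_m|\cdot|A_n|$. Since $A_m \subset \{0,1,\dots,m-1\}$ and the elements of $mA_n$ lie in $m\Z$ with distinct residues modulo $mn$ (they are $m$ times distinct elements of $\{0,\dots,n-1\}$), the representation $a + mb$ with $a\in A_m$, $b\in A_n$ is unique, so the sumset $A_m + mA_n$ has exactly $|A_m||A_n|$ elements. Combining the two upper bounds $|A_m|<2\sqrt m$ and $|A_n|<2\sqrt n$ from Lemma \ref{lem64} gives $|B|<4\sqrt{mn}$.

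Next, for surjectivity of $\varphi_{mn}$ on $B+B$, fix an arbitrary $x\in\Z/mn\Z$ and write $x \equiv r + ms \pmod{mn}$ with $r \in \{0,\dots,m-1\}$ and $s\in\{0,\dots,n-1\}$. Since $\varphi_m(A_m+A_m) = \Z/m\Z$, there exist $a_1,a_2\in A_m$ with $a_1+a_2 \equiv r \pmod m$. Using $A_m \subset \{0,\dots,m-1\}$, the integer $a_1+a_2$ lies in $\{0,\dots,2m-2\}$, so it equals $r + m\varepsilon$ for some $\varepsilon\in\{0,1\}$. Then
\[
(a_1+a_2) + m(b_1+b_2) \equiv r + m(\varepsilon + b_1+b_2) \pmod{mn},
\]
so it suffices to choose $b_1,b_2 \in A_n$ with $b_1+b_2 \equiv s-\varepsilon \pmod n$, which is possible by $\varphi_n(A_n+A_n)=\Z/n\Z$. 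This shows $x \in \varphi_{mn}(B+B)$, completing the argument.

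No serious obstacle is expected; the only point needing a little care is the carry digit $\varepsilon$, but absorbing it into the $A_n+A_n$ step via the freedom $s\mapsto s-\varepsilon$ dispenses with it immediately.
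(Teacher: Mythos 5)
Your proof is correct and is exactly the argument the paper leaves implicit (its proof of Lemma \ref{lem65} is just ``Clear from Lemma \ref{lem64}''): uniqueness of the base-$m$ representation $a+mb$ gives $|B|=|A_m||A_n|<4\sqrt{mn}$, and the carry $\varepsilon\in\{0,1\}$ is correctly absorbed into the choice of $b_1,b_2$ with $b_1+b_2\equiv s-\varepsilon\pmod n$. Nothing is missing.
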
 
\begin{proof}
Clear from Lemma \ref{lem64}.
\end{proof}

Let $(m_k)_{k\ge1}$ be a sequence of integers greater than $1$.
By applying repeatedly the above lemma  the sequence
$$
B_k:=\sum_{i=1}^k\left(\prod_{1\le j\le  i-1}m_j\right)A_{m_i}=
A_{m_1}+m_1A_{m_2}+m_1m_2A_{m_3}+\cdots+ m_1\dots m_{k-1}A_{m_k}
$$
satisfies $|B_k|<2^k\sqrt{m_1\dots m_k}$ and $\varphi_{m_1\dots m_k}(B_k+B_k)=\Z/m_1\dots m_k\Z$. The sequence $(B_k)_{k\ge1}$ is increasing for the inclusion of sets, hence we can define
$$
B=\sum_{i=1}^{\infty}\left(\prod_{1\le j\le  i-1}m_j\right)A_{m_i}=\bigcup_{k\ge1}B_k.
$$
For any $k$ we have $\varphi_{m_1\dots m_k}(B)=\varphi_{m_1\dots m_k}(B_k)$ thus
\begin{equation}\label{eqbupB}
\bup(B)\le \frac{2^k}{\sqrt{m_1\dots m_k}}.
\end{equation}
We now fix any sequence $(m_k)$ such that for any integer $m$ there exists $k$ such that
$m|m_1\dots m_k$. This implies that the right-hand-side term of \eqref{eqbupB} tends to $0$ when
$k\to\infty$. Thus $\ba(B)=0$. Let $m$ be a positive integer and $k$ such that $m|m_1\dots m_k$.
Since $\varphi_{m_1\dots m_k}(B+B)=\varphi_{m_1\dots m_k}(B_k+B_k)=\Z/m_1\dots m_k\Z$ we also have $\varphi_m(B+B)=\Z/m\Z$. We infer $\bup(B+B)=1$.

Finally for any $k\ge1$ changing each element $a\ne0$ of $A_{m_k}$ by $a+\lambda_am_k$
for enough large $\lambda_a\in\N$ 
we can easily make $B$ very sparse in such a way that $\ua(B+B)=0$. This implies
$\bdo(B+B)=0$ by \eqref{eqn0}. Note also that distending $B$ by the above process does not affect $\varphi_m(B)$ and the crucial fact that $\varphi_m(B+B)=\Z/m\Z$.

\medskip
We thus have the following facts which was firstly observed 
in \cite{Leo2} by a different construction.

\begin{prop}\label{pp62b}
For an appropriate choice of the sequence $(m_k)_{k\ge1}$, 
the set $B$ satisfies 
$\ba(B)=0$, $\bdo(B+B)=\ua(B+B)=0$ and $\bup(B+B)=1$. 
\end{prop}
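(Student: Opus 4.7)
The plan is to assemble the claim directly from the construction $B=\bigcup_{k\ge1}B_k$ developed in the paragraphs preceding the statement, then tune two independent things: the sequence $(m_k)$, to force $\ba(B)=0$ and $\bup(B+B)=1$ simultaneously; and a sparsifying perturbation of the blocks $A_{m_k}$, to force $\ua(B+B)=0$.

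First I would fix $(m_k)_{k\ge1}$ to be any multiplicatively exhaustive sequence of integers $\ge 2$, for instance $m_k=k+1$, so that for every positive integer $m$ some finite product $M_k:=m_1\cdots m_k$ is divisible by $m$. Iterating Lemma \ref{lem65} $k$ times gives $|B_k|<2^k\sqrt{M_k}$ together with $\varphi_{M_k}(B_k+B_k)=\Z/M_k\Z$. Because the construction is nested, $\varphi_{M_k}(B)=\varphi_{M_k}(B_k)$, so the bound \eqref{eqbupB} yields $\bup(B)\le 2^k/\sqrt{M_k}\to 0$, and hence $\ba(B)=0$. For any $m$, choosing $k$ with $m\mid M_k$ gives $\varphi_m(B+B)=\Z/m\Z$, whence $\bup(B+B)=1$.

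Next, to force $\ua(B+B)=0$ (from which $\bdo(B+B)=0$ follows by the chain \eqref{eqn0}), I would sparsify $B$ without disturbing its congruence data. Concretely, in the $k$-th block $M_{k-1}A_{m_k}$ I would replace each nonzero $a\in A_{m_k}$ by $a+\lambda_a m_k$ with $\lambda_a\in\N$ chosen inductively in $k$ (and in the order of the elements themselves). The shift contributes $\lambda_a M_k$ to the corresponding element of $B$, which is a multiple of $M_k$, so the residue classes of $B$ and of $B+B$ modulo $M_k$ are unchanged; in particular the identities $\bup(B)=0$ and $\varphi_{M_k}(B+B)=\Z/M_k\Z$ survive the distension. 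Picking each new $\lambda_a$ so large that the new element exceeds, say, twice the sum of all previously placed elements of $B$, a routine counting argument gives that the number of elements of $B+B$ up to the $n$-th element $b_n$ of $B$ is $O(n^2)$ while $b_n$ grows doubly exponentially, forcing $\uup(B+B)=0$, hence $\ua(B+B)=0$.

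The one point requiring real care, and the main obstacle, is compatibility of the recursive choice of $\lambda_a$ with the nested structure $B_k\subset B_{k+1}$: at each stage one must distend the new elements of $A_{m_{k+1}}$ without modifying the blocks already fixed nor the crucial congruence surjectivity. This works cleanly because each $B_k$ is entirely contained in the interval $[0,M_k)$ after being absorbed by the multiplicative factor $M_k$ present in front of $A_{m_{k+1}}$, so stage $k+1$ does not interfere with stage $k$, and the upper Banach density estimate can be pushed through uniformly.
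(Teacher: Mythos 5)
Your plan coincides with the paper's: the same iterated bases $A_{m_k}$, the same nested union $B=\bigcup_k B_k$, and the same two-step tuning (choice of $(m_k)$, then a distension). The parts $\ba(B)=0$ and $\bup(B+B)=1$ are fine. One imprecision in the distension step: it is not true that the residues of $B$ modulo $M_k$ are unchanged. An element of $B$ receives a shift $\lambda_{a_i}M_i$ from \emph{every} level $i$ that you distend, and for $i<k$ these are multiples of $M_i$ but not of $M_k$, so $\varphi_{M_k}(B)$ does move. What survives --- and what you actually need --- is that each distended block still meets every residue class modulo $m_i$, so the covering argument behind Lemma \ref{lem65} can be re-run level by level to recover $\varphi_{M_k}(B_k+B_k)=\Z/M_k\Z$ and the bound $|\varphi_{M_k}(B)|\le |B_k|$. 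This is repairable, but the justification as written is wrong.

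The genuine gap is the claim $\ua(B+B)=0$, i.e.\ $\uup(B+B)=0$. First, your premise that each new element of $B$ exceeds twice the sum of all previously placed elements cannot be realized: the distension acts on the digit sets $A_{m_k}$, not on individual elements of $B$, and $B$ always contains pairs of elements at the fixed distance $\min(A_{m_1}\smallsetminus\{0\})$ after distension (two elements differing only in their level-one digit), so $b_n$ does not grow doubly exponentially. Second, and more importantly, even the correct estimate $|(B+B)\cap[0,N]|\le |B\cap[0,N]|^2=o(N)$ only yields $\dup(B+B)=0$, hence $\bdo(B+B)=\udo(B+B)=0$ by \eqref{eqn0}; it says nothing about windows $[k+1,k+n]$ far from the origin, which is what upper Banach density measures (compare $\bigcup_n[n!,n!+n]$, of zero upper asymptotic but full upper Banach density). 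Near twice a large element of $B$, the set $B+B$ contains a whole translate of $B_j+B_j$ for smaller $j$, so one must bound $|(B+B)\cap[k+1,k+n]|$ uniformly in $k$: e.g.\ choose the $\lambda$'s so that distinct ``high parts'' at levels $\ge j$ are separated by much more than $n+\max(B_{j-1}+B_{j-1})$, forcing any window of length $n$ to meet at most one translate of $B_{j-1}+B_{j-1}$, and then arrange $|B_{j-1}|^2=o(n)$ for the relevant $j=j(n)$. Without some such uniform-in-$k$ argument the assertion $\ua(B+B)=0$ remains unproved (the paper is admittedly also terse here, but your proposed justification would prove only the asymptotic-density statement).
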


\subsection{$A$ has a positive Buck density does not imply that $A+A$ has a Buck density}
\label{SS65}
Proposition 2.2 of  \cite{HHP} provides a sequence $A$ having a positive asymptotic density $\da(A)=\frac37$ while $\dup(A+A)=1>\frac67= \ddo(A+A)$.  
It is given by
$$
A=\{0,1,2,3\}\cup\bigcup_{k\ge1}\Big((\{0,2,3\}+7\N)\cap[7N_{2k},7N_{2k+1}]\cup
(\{0,1,2\}+7\N)\cap[7N_{2k+1},7N_{2k+2}]\Big)
$$
where $(N_k)$ is any sequence of positive integers satisfying $N_{k}=o(N_{k+1})$, $k\to\infty$. We easily find that $\bdo(A)=\frac27$ and $\bup(A)=\frac47$. We ask the following question.\\[0.2em]
\textbf{Question.} \emph{Let $0<\alpha<1$. Does there exist a set $A\subset \N$ with positive Buck density $\ba(A)=\alpha$ and verifying
$\bdo(A+A)\ne \bup(A+A)$ ?}
\\[0.5em]
When $\alpha >\frac12$ such a set $A$ does not exist since otherwise we would have
$\ddo(A+A)\le 1<2\ba(A)=2\da(A)=2\ddo(A)$ by \eqref{eqn0}. By Kneser's theorem this would imply that $A+A$ is periodic and has a Buck density, a contradiction. When $\alpha=\frac12$ we get a similar contradiction by using 
 \S\ref{SS62} instead, that is an appropriate version of Kneser's theorem with lower Buck density.
\\[0.5em]
However we can answer affirmatively when $0<\alpha<\frac14$. 
This will depend on a construction which combines both  sequence $B_{\alpha}$ of Example \ref{exa28} and 
sequence $D_K$ of  \S \ref{SS63}.\\[0.3em]
Let $A=A_{\alpha,K}=B_{\alpha}\cup D_K$. Since $\ba(D_K)=0$ and $\ba(B_{\alpha})=\alpha$ we firstly have
$\ba(A)=\alpha$ by  i) of Lemma \ref{lem21}. Secondly $D_K+D_K\subset A+A$ hence
$\bup(A+A)\ge \delta_K$ by Lemma \ref{lem61}. Thirdly
since $\bdo(D_K+D_K)=0$ we have by iii) of Lemma \ref{lem21}
\begin{equation}\label{bdoAA}
\bdo(A+A)\le \bup((B_{\alpha}+D_K)\cup(B_{\alpha}+B_{\alpha})).
\end{equation}
Let $r=\min\{n\ge0 : \lfloor 2^n\alpha\rfloor=1\bmod 2\}$, assume that $\{j\ge0 : k_j\le r-2\}$ is non empty and let $s=1+\max\{j\ge0 : k_j\le r-2\}$. By \eqref{Balpha} we get
$$
(B_{\alpha}+B_{\alpha})\cup (B_{\alpha}+D_K)\subset
\left\{\sum_{f\in F}2^f\,:\, F\subset\N,\ |F|<\infty\text{ and }F\cap \{k_0,k_1,\dots,k_{s-1}\}=\varnothing \right\}.
$$
The sequence defined by the right-hand side member is periodic in $\N$  and has Buck density $2^{-s}$. By \eqref{bdoAA} we infer $\bdo(A+A)\le 2^{-s}$. If $\alpha<\frac14$ we have $r\ge3$ and we can fix $k_0=1$. Thus 
$s\ge1$ and $\bdo(A+A)\le\frac12$. By an appropriate choice of $(k_t)_{t\ge1}$, we can make sure that 
$\prod_{t=0}^{\infty}\left(1-2^{k_{t-1}-k_t}\right)>\frac12$. \\[0.3em]
We can state our result which completes Proposition \ref{pp62} and gives Theorem \ref{thm13}.

\begin{prop}\label{pp67}
Let $0<\alpha<\frac14$. Then there exists a set $A\subset \N$ such that
$\ba(A)=\alpha$ and $\bdo(A+A)\le \frac12<\bup(A+A)$.
\end{prop}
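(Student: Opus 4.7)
The plan is to take the construction $A = A_{\alpha,K} = B_\alpha \cup D_K$ that has already been set up, and to verify that the three required properties follow by combining the facts established for $B_\alpha$ in Example \ref{exa28} with those for $D_K$ in \S\ref{SS63}, once the auxiliary sequence $K = (k_t)_{t\ge 0}$ is chosen appropriately.

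First I would record $\ba(A) = \alpha$: from $\ba(B_\alpha) = \alpha$, $\ba(D_K) = 0$, and $B_\alpha \subset A$, Lemma \ref{lem21}(i) gives $\bup(A) \le \bup(B_\alpha) + \bup(D_K) = \alpha$, while monotonicity of $\bdo$ gives $\bdo(A) \ge \bdo(B_\alpha) = \alpha$. Next, since $D_K + D_K \subset A + A$, Lemma \ref{lem61} yields the lower bound $\bup(A+A) \ge \bup(D_K + D_K) = \delta_K$. For the upper bound on $\bdo(A+A)$, I would write $A+A = (B_\alpha + B_\alpha) \cup (B_\alpha + D_K) \cup (D_K + D_K)$ and exploit $\bdo(D_K + D_K) = 0$ together with Lemma \ref{lem21}(iii) to obtain inequality \eqref{bdoAA}.

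The heart of the argument is to control the right-hand side of \eqref{bdoAA} by containing $(B_\alpha + B_\alpha) \cup (B_\alpha + D_K)$ inside an explicit periodic set. Using \eqref{Balpha}, all elements of $B_\alpha$ are divisible by $2^{r-1}$, so the same holds for $B_\alpha + B_\alpha$ and $B_\alpha + D_K$ (where for the latter I use that $D_K$ has no binary digit at positions in $K$, and that $2^{r-1} \mid B_\alpha$ means binary digits vanish below position $r-1$). Intersecting these constraints gives the inclusion in the periodic set of natural numbers whose binary expansion avoids positions $k_0, \ldots, k_{s-1}$, of density $2^{-s}$. Since $\alpha < 1/4$ forces $r \ge 3$, fixing $k_0 = 1 \le r-2$ yields $s \ge 1$ and hence $\bdo(A+A) \le 2^{-s} \le 1/2$.

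The remaining freedom is the tail $(k_t)_{t \ge 1}$, which I would take very lacunary (e.g.\ $k_t - k_{t-1}$ tending to infinity fast enough) so that $\delta_K = \prod_{t\ge 0}(1 - 2^{k_{t-1} - k_t}) > 1/2$. The main subtlety is coordinating the two constraints on $K$: $k_0$ must be small (indeed $= 1$) to make the periodic bound for $\bdo(A+A)$ at most $1/2$, while the gaps $k_t - k_{t-1}$ for $t \ge 1$ must be large enough to push $\delta_K$ above $1/2$. These two requirements are compatible because only $k_0$ is pinned down, and the remaining $k_t$ can be spaced arbitrarily; the infinite product converges to a value as close to $1$ as desired whenever the successive ratios $2^{k_{t-1}-k_t}$ are summable with small sum.
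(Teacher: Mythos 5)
Your proposal is correct and follows essentially the same route as the paper: the set $A=B_\alpha\cup D_K$, Lemma \ref{lem21} for $\ba(A)=\alpha$ and for \eqref{bdoAA}, Lemma \ref{lem61} for $\bup(A+A)\ge\delta_K$, the binary-digit inclusion forcing $\bdo(A+A)\le 2^{-s}\le\tfrac12$ via $k_0=1$ and $r\ge3$, and a lacunary tail to get $\delta_K>\tfrac12$. The only tiny imprecision is the remark that the product can be made ``as close to $1$ as desired'': with $k_0=1$ pinned, the first factor $1-2^{k_{-1}-k_0}=\tfrac34$ caps $\delta_K$ at $\tfrac34$, but this still exceeds $\tfrac12$ as needed.
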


\subsection{On a question of Ruzsa}
Let $A$ be such that $\alpha:=\bdo(A)>0$. Let $\varepsilon >0$.
Then  there exist
a positive integer $q$ and two sets $R,S\subset\{0,1,\dots,q-1\}$,
such that 
\begin{align*}
&R+q\N\subset A\subset S+q\N,\\
&(\alpha-\varepsilon)q\le |R|\le \alpha q,\\
&\forall s\in S,\ A\cap(s+q\N)\ne\varnothing.
\end{align*}
We still denote $R$ and $S$ their images by the natural homomorphism from $\Z$ onto
$\Z/q\Z$. Therefore
$$
\frac{|R+S|}q\le \bdo(A+A)\le \bup(A+A)\le \frac{|S+S|}q.
$$
By a consequence of Pl\"unnecke's inequality (see Theorem 1.8.7 of \cite{Ruz} or Corollary 6.28 of \cite{TV}) applied in the abelian group $\Z/q\Z$, we get $|R||S+S|\le|R+S|^2$, thus
$$
(\alpha-\varepsilon)\bup(A+A)\le \bdo(A+A)^2.
$$
By letting $\varepsilon$ tend to $0$ we get the following statement.

\begin{prop}\label{pp68}
Let $A\subset \N$. Then
\begin{equation}\label{eqbdo}
\bdo(A+A)\ge \sqrt{\bdo(A)\bup(A+A)}.
\end{equation}
\end{prop}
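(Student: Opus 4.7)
The plan is to follow the blueprint already suggested in the paragraph preceding Proposition \ref{pp68}: sandwich $A$ between two periodic sets of the same period $q$, transfer the inequality to a sumset inequality in the cyclic group $\Z/q\Z$, and invoke a consequence of Pl\"unnecke's inequality.

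First I would dispose of the trivial case $\bdo(A)=0$, in which case the bound holds automatically. Assume $\alpha:=\bdo(A)>0$ and fix $\varepsilon\in(0,\alpha)$. Using the description of $\bdo$ and $\bup$ along a multiplicatively increasing and exhaustive sequence (formula \eqref{eqbupbdo}), I would pick a common modulus $q$ large enough that $R:=A_*^{(q)}$ and $S:=A_\infty^{(q)}$ satisfy
$(\alpha-\varepsilon)q\le |R|\le \alpha q$,
that $(R+q\N)\smallsetminus A$ and $A\smallsetminus(S+q\N)$ are finite, and that every $s\in S$ meets $A$ infinitely often. These residue sets are the substitute, in the Buck setting, for a classical Freiman-type pair of enveloping progressions.

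Next I would convert the hypothesis into sumset data modulo $q$. Since $A\subset S+q\N$ up to a finite set, we have $A+A\subset (S+S)+q\N$ up to a finite set, and because $(S+S)+q\N\in\ZZ$ this yields
\begin{equation*}
\bup(A+A)\le \frac{|S+S|}{q}.
\end{equation*}
For the lower bound on $\bdo(A+A)$, fix for every $s\in S$ an element $a_s\in A\cap(s+q\N)$; then for every $(r,s)\in R\times S$ the translate $r+nq+a_s$ lies in $A+A$ for all sufficiently large $n$, so the periodic set $(R+S)+q\N$ coincides with a subset of $A+A$ up to a finite set. Therefore
\begin{equation*}
\bdo(A+A)\ge \frac{|R+S|}{q},
\end{equation*}
where $R+S$ is computed in $\Z/q\Z$.

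The final step is to combine these two estimates with Pl\"unnecke's inequality in the finite abelian group $\Z/q\Z$, which gives $|R|\,|S+S|\le |R+S|^2$ (cf.\ Theorem~1.8.7 of \cite{Ruz} or Corollary~6.28 of \cite{TV}). Stringing the inequalities together,
\begin{equation*}
(\alpha-\varepsilon)q\cdot q\,\bup(A+A)\le |R|\,|S+S|\le |R+S|^2\le q^2\,\bdo(A+A)^2,
\end{equation*}
so $(\alpha-\varepsilon)\bup(A+A)\le \bdo(A+A)^2$, and letting $\varepsilon\to 0$ concludes. The only point demanding a little care is the passage from the finite-set identities $(R+q\N)\smallsetminus A$, $A\smallsetminus(S+q\N)$ and from the eventual inclusion $(R+S)+q\N\subset A+A$ to the corresponding Buck density bounds; once \eqref{eqbupbdo} is invoked at a sufficiently deep level of the exhaustive sequence this is routine, so the genuine content of the proposition is carried by Pl\"unnecke's inequality.
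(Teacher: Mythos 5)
Your architecture is the same as the paper's (sandwich $A$ between periodic sets modulo a common $q$, pass to $\Z/q\Z$, and apply Pl\"unnecke's inequality $|R|\,|S+S|\le |R+S|^2$), and both your lower bound $\bdo(A+A)\ge |R+S|/q$ and the final chain of inequalities are fine. The gap is in the upper bound. You take $S=A_\infty^{(q)}$, so that only $A\smallsetminus(S+q\N)$ is \emph{finite}, and you then claim $A+A\subset (S+S)+q\N$ up to a finite set. That inference is false: writing $A=A_1\cup F$ with $A_1\subset S+q\N$ and $F$ finite, the set $F+A_1$ is infinite and its residues lie in $F^{(q)}+S$, which need not be contained in $S+S$. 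Concretely, for $A=\{0\}\cup(1+q\N)$ one has $A_\infty^{(q)}=\{1\}$ and $|S+S|/q=1/q$, yet $A+A\supset 1+q\N$, so $\bup(A+A)=2/q$; the intermediate bound $\bup(A+A)\le |S+S|/q$ fails with your choice of $S$.

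The repair is exactly the paper's choice: take $S=A^{(q)}$, the set of \emph{all} residues met by $A$, including those met only finitely often. Then $A\subset S+q\N$ holds exactly, hence $A+A\subset (S+S)+q\N$ exactly and $\bup(A+A)\le |S+S|/q$. Nothing else in your argument is affected: the lower bound only needs each class $s\in S$ to contain at least one element $a_s$ of $A$ (which holds for $A^{(q)}$ by definition, infinitude being unnecessary there), and Pl\"unnecke is applied to the same pair $(R,S)$ with $R=A_*^{(q)}$, $|R|\ge(\alpha-\varepsilon)q$. With that single change your proof coincides with the paper's.
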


As a corollary we give a partial answer to a question of Ruzsa involving asymptotic density.

\begin{cor}\label{cor69}
Let $A\subset \N$. If $A$ admits a Buck  density then
$$
\ddo(A+A)\ge \sqrt{\da(A)\dup(A+A)}.
$$
\end{cor}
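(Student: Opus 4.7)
The plan is to deduce the corollary directly from Proposition \ref{pp68} by using the chain of density inequalities in Lemma \ref{lem22}. The hypothesis that $A$ admits a Buck density means $\bdo(A)=\bup(A)=\ba(A)$. Inserting this equality into \eqref{eqn0} forces all intermediate densities to collapse to the common value; in particular $\ddo(A)=\dup(A)$, so $A$ automatically admits an asymptotic density and
$$
\da(A)=\ba(A)=\bdo(A).
$$

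Next I would invoke Proposition \ref{pp68} to get the lower Buck bound
$$
\bdo(A+A)\ge\sqrt{\bdo(A)\,\bup(A+A)}=\sqrt{\da(A)\,\bup(A+A)}.
$$
To transfer this bound from Buck to asymptotic densities on the sumset, I would apply \eqref{eqn0} twice: first $\bdo(A+A)\le\ddo(A+A)$ to upper-bound the left side by $\ddo(A+A)$, and second $\dup(A+A)\le\bup(A+A)$ to lower-bound the right side by replacing $\bup(A+A)$ with $\dup(A+A)$ (the inequality is preserved under $\sqrt{\cdot}$). Chaining these gives
$$
\ddo(A+A)\ge\bdo(A+A)\ge\sqrt{\da(A)\,\bup(A+A)}\ge\sqrt{\da(A)\,\dup(A+A)},
$$
which is exactly the claim.

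There is no real obstacle here; the only point to state carefully is that the existence of the Buck density forces the existence of the asymptotic density with the same value, which is the content of the squeeze in Lemma \ref{lem22}. Everything else is a one-line application of Proposition \ref{pp68} combined with the monotonicity $\bdo\le\ddo$ and $\dup\le\bup$.
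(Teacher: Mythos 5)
Your proof is correct and is essentially identical to the paper's: both deduce $\da(A)=\ba(A)=\bdo(A)$ from the squeeze in Lemma \ref{lem22}, apply Proposition \ref{pp68}, and then transfer the bound to asymptotic densities via $\bdo(A+A)\le\ddo(A+A)$ and $\dup(A+A)\le\bup(A+A)$. Nothing to add.
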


\begin{proof}
By Lemma \ref{lem22} we have $\da(A)=\ba(A)$, $\ddo(A+A)\ge\bdo(A+A)$
and $\dup(A+A)\le \bup(A+A)$. Hence the result by \eqref{eqbdo}.
\end{proof}
 
\begin{rem}

\begin{enumerate}[itemsep=2pt,parsep=2pt]

\item A similar inequality holds true with uniform density under the same assumption:
$\udo(A+A)\ge \sqrt{\ua(A)\uup(A+A)}$. The proof is alike.

\item
Both for ensuring that Corollary \ref{cor69} is not trivial 
and for improving on Theorem \ref{thm13},  
a key step would be to obtain a sequence $A$ such that $\ba(A)$ exists and 
$\ddo(A+A)\ne \dup(A+A)$.

\end{enumerate}
\end{rem}

\end{document}